\documentclass[a4paper,11pt]{article}
\usepackage[a4paper]{geometry}
\usepackage{graphicx}
\usepackage{amssymb,amsmath, amsthm}
\usepackage{mathrsfs}
\usepackage{bbm,bbold}
\usepackage{lineno}
\usepackage{enumitem}


\newtheorem{theorem}{Theorem}
\newtheorem{lemma}[theorem]{Lemma}
\newtheorem{proposition}[theorem]{Proposition}

\newcommand{\myboldmath}[1]{\mathbbm{#1}}
\newcommand{\R}{\myboldmath{R}}
\newcommand{\G}{\myboldmath{G}}
\newcommand{\Q}{\myboldmath{Q}}

\newcommand{\Z}{\myboldmath{Z}}

\newcommand{\1}{\mathbf{1}}

\newcommand{\im}{\operatorname{im}}
\DeclareMathOperator{\supp}{supp}
\newcommand{\up}{\textup{up}}
\newcommand{\down}{\textup{down}}

\begin{document}

\title{Higher Dimensional Discrete Cheeger Inequalities}
\author{Anna Gundert\footnote{Universit\"at zu K\"oln, Weyertal 86-90, D-50923 K\"oln, Germany. \texttt{anna.gundert@uni-koeln.de}. Research supported by the Swiss National Science Foundation (SNF Projects 200021-125309 and 200020-138230).} \and May Szedl\'ak\footnote{Institut f\"{u}r Theoretische Informatik, ETH Z\"{u}rich, CH-8092 Z\"{u}rich, Switzerland. \texttt{may.szedlak@inf.ethz.ch}.}}

\maketitle

\begin{abstract}
For graphs there exists a strong connection between spectral and combinatorial expansion properties. This is expressed, e.g., by the discrete Cheeger inequality, the lower bound of which states that $\lambda(G) \leq h(G)$, where $\lambda(G)$ is the second smallest eigenvalue of the Laplacian of a graph $G$ and $h(G)$ is the Cheeger constant measuring the edge expansion of~$G$. We are interested in generalizations of expansion properties to finite simplicial complexes of higher dimension (or uniform hypergraphs).

Whereas higher dimensional Laplacians were introduced already in 1945 by Eckmann, the generalization of edge expansion to simplicial complexes  is not straightforward. 
Recently, a topologically motivated notion analogous to edge expansion that is based on $\mathbb{Z}_2$-cohomology was introduced by Gromov and independently by Linial, Meshulam and Wallach. It is known that for this generalization there is no direct higher dimensional analogue of the lower bound of the Cheeger inequality.

A different, combinatorially motivated generalization of the Cheeger constant, denoted by $h(X)$, was studied by Parzanchevski, Rosenthal and Tessler. They showed that indeed $\lambda(X) \leq h(X)$, where $\lambda(X)$ is the smallest non-trivial eigenvalue of the ($(k-1)$-dimensional upper) Laplacian, for the case of $k$-dimensional simplicial complexes $X$ with complete $(k-1)$-skeleton.

Whether this inequality also holds for $k$-dimensional complexes with non-com\-plete $(k-1)$-skeleton has been an open question.
We give two proofs of the inequality for arbitrary complexes. The proofs differ strongly in the methods and structures employed,
and each allows for a different kind of additional strengthening of the original result.
\end{abstract}

\newpage
\section*{Introduction}
Roughly speaking, a graph is an expander if it is sparse and at the same time well-connected. Such graphs have found various applications, in theoretical computer science as well as in pure mathematics.
Expander graphs have, e.g., been used to construct certain classes of error correcting codes, in a proof of the PCP Theorem (\cite{Dinur:2007}, see also \cite{RadhakrishnanSudan:2007}), a deep result in computational complexity theory, and in the theory of metric embeddings.
See, e.g., the surveys \cite{HooryLinialWigderson:2006} and \cite{Lubotzky:2011} for these and other applications.

In recent years, the combinatorial study of simplicial complexes - considering them as a higher-dimensional generalization of graphs - has attracted increasing attention and the profitability of the concept of expansion for graphs has inspired the search for a corresponding higher-dimensional notion, see, e.g., \cite{GundertWagner:2012, Lubotzky:2013, Parzanchevski:2012, Steenbergen:2012}.

The expansion of a graph $G$ can be measured by the \emph{Cheeger constant}\footnote{\label{foot1}Often the Cheeger constant is defined in a slightly different but closely related way, see Section~\ref{sec:DiffNotions}.}
\[
h(G) := \min_{{A \subseteq V} \atop {0 <|A| <|V|}}\frac{|V||E(A,V\setminus A)|}{|A||V\setminus A|}.
\]
Here $E(A,V\setminus A)$ is the set of edges with one endpoint in $A$ and the other in $V \setminus A$.
A straightforward higher-dimensional analogue is the following \emph{Cheeger constant} of a $k$-dimensional simplicial complex $X$ with \emph{complete} $(k-1)$-skeleton, studied in \cite{Parzanchevski:2012}:
\[
h(X) := \min_{{V=\amalg_{i=0}^k A_i} \atop {A_i \neq \emptyset}} \frac{|V||F(A_0,A_1,\dots,A_k)|}{|A_0|\cdot|A_1|\cdot\ldots\cdot|A_k|}.
\]
Here $F(A_0,A_1,\dots,A_k)$ is the set of $k$-dimensional faces of $X$ with exactly one vertex in each set $A_i$.
A different, more topologically motivated notion based on $\mathbb{Z}_2$-cohomology was introduced by Gromov \cite{Gromov:2010} and independently by Linial, Meshulam and Wallach \cite{LinialMeshulam:2006, MeshulamWallach:2009}. We will not work with this notion but we later consider an adapted version of $h(X)$ inspired by it. See discussion of results and Section \ref{sec:DiffNotions} for more details.

For graphs, this combinatorial notion of expansion is connected to the spectra of certain matrices associated with the graph: the adjacency matrix and the Laplacian.
This connection between combinatorial and spectral expansion properties of a graph is established, e.g., by the discrete Cheeger inequality \cite{Alon:1986,Alon:1985,Dodziuk:1984, Tanner:1984}. For a graph $G$ with second smallest eigenvalue $\lambda(G)$ of the Laplacian $L(G)$ (see Section~\ref{sec:Prelims}) and maximum degree $d_\text{max}$, it states that
\[
\lambda(G) \leq h(G) \leq \sqrt{8d_\text{max}\lambda(G)}.
\]

A different approach to generalizing expansion is hence to consider higher-di\-men\-sion\-al analogues of graph Laplacians.
Higher-dimensional Laplacians were first introduced by Eckmann \cite{Eckmann:1945} in the 1940s and have since then been used in various contexts, see \cite{Kalai:1983} for an example.
We denote by $\lambda(X)$ the smallest non-trivial eigenvalue of this Laplacian. More precisely, $\lambda(X)$ is the smallest eigenvalue of the upper Laplacian $L^\up_{k-1}(X)$ on $(B^{k-1}(X;\R))^\perp$. (See Section~\ref{sec:Prelims} for further details.)

The Cheeger inequality for graphs has proven to be a useful tool. Computing the Cheeger constant is difficult, from the standpoint of complexity theory \cite{Matula:1990, Blum:1981} but often also for explicit examples.
The lower bound -- even though easy to prove -- hence gives a helpful, polynomially computable, lower bound on the Cheeger constant.
Many constructions of families of expander graphs (graphs $(G_n)_{ n \in \mathbb{N}}$ on $n$ vertices with constant edge degree, where the Cheeger constant is bounded from below by a constant) use eigenvalues to establish a lower bound on the combinatorial expansion \cite{Gabber:1981, Lubotzky:1988, Margulis:1973,Margulis:1988, ReingoldVadhanWigderson:2002}.

Parzanchevski, Rosenthal and Tessler \cite{Parzanchevski:2012} recently showed the following analogue of this lower bound of the Cheeger inequality for $k$-dimensional simplicial complexes with complete $(k-1)$-skeleton.
\begin{theorem}[Parzanchevski et al. \cite{Parzanchevski:2012}]\label{thm:Parz}
Let $X$ be a $k$-dimensional simplicial complex with complete $(k-1)$-skeleton. Then 
$\lambda(X) \leq h(X)$.
\end{theorem}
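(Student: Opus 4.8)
The plan is to use the variational (Rayleigh quotient) characterization of $\lambda(X)$ and to exhibit a single test cochain built from the optimal Cheeger partition. Since $B^{k-1}(X;\R)\subseteq\ker d_{k-1}=\ker L^\up_{k-1}$, the subspace $(B^{k-1}(X;\R))^\perp$ is $L^\up_{k-1}$-invariant and
\[
\lambda(X)=\min_{0\neq f\perp B^{k-1}(X;\R)}\frac{\langle L^\up_{k-1}f,f\rangle}{\langle f,f\rangle}=\min_{0\neq f\perp B^{k-1}(X;\R)}\frac{\|d_{k-1}f\|^2}{\|f\|^2},
\]
so it suffices to produce one nonzero $f\in(B^{k-1}(X;\R))^\perp$ whose Rayleigh quotient equals $h(X)$.

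First I would fix a partition $V=A_0\amalg\dots\amalg A_k$ attaining the minimum in $h(X)$. For $j\in\{0,\dots,k\}$ let $B_j$ denote the set of $(k-1)$-faces having exactly one vertex in each $A_i$ with $i\neq j$ (equivalently, the faces transversal to the partition that avoid $A_j$); because the $(k-1)$-skeleton is complete, $|B_j|=\prod_{i\neq j}|A_i|$. Orienting each face in $B_j$ in increasing part-index order, I define the test cochain $f\in C^{k-1}(X;\R)$ by $f(\sigma)=(-1)^j|A_j|$ for $\sigma\in B_j$ and $f(\sigma)=0$ on all remaining $(k-1)$-faces. The weight $|A_j|$ (rather than, say, $\prod_{i\neq j}|A_i|$) is the crucial choice: it is exactly the weight making the final estimate tight, and simultaneously the one that forces the orthogonality in the next step.

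The main computation is to evaluate $d_{k-1}f$ on a $k$-face $\tau$. If $\tau\in F(A_0,\dots,A_k)$ is transversal, each of its $k+1$ facets lies in a distinct $B_j$ and, after matching the incidence signs against the signs $(-1)^j$, all $k+1$ contributions reinforce, giving $d_{k-1}f(\tau)=\sum_j|A_j|=|V|$. If $\tau$ is not transversal, then either some part meets $\tau$ in at least three vertices, in which case no facet lies in any $B_j$, or exactly one part meets $\tau$ in two vertices $a,b$; in the latter case only the two facets $\tau\setminus\{a\}$ and $\tau\setminus\{b\}$ survive, they lie in the same $B_j$, and the incidence signs at the two deleted positions are opposite, so the two terms cancel. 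Hence $d_{k-1}f$ is supported exactly on $F(A_0,\dots,A_k)$ with constant value $|V|$, and $\|d_{k-1}f\|^2=|V|^2\,|F(A_0,\dots,A_k)|$. An entirely analogous pairing argument shows $d_{k-2}^*f=0$: for a $(k-2)$-face $\rho$ the nonzero terms of $(d_{k-2}^*f)(\rho)$ arise from adjoining a vertex in one of the (at most two) parts missed by $\rho$, and the two resulting families cancel because of the alternating signs. This gives $f\perp B^{k-1}(X;\R)$, so $f$ is an admissible test cochain (and $f\neq 0$ since every $A_j\neq\emptyset$).

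Finally I would assemble the Rayleigh quotient. From the construction, $\|f\|^2=\sum_j|B_j|\,|A_j|^2=\sum_j\big(\prod_{i\neq j}|A_i|\big)|A_j|^2=\big(\prod_i|A_i|\big)\sum_j|A_j|=|V|\prod_i|A_i|$, so
\[
\frac{\|d_{k-1}f\|^2}{\|f\|^2}=\frac{|V|^2\,|F(A_0,\dots,A_k)|}{|V|\prod_i|A_i|}=\frac{|V|\,|F(A_0,\dots,A_k)|}{\prod_i|A_i|}=h(X),
\]
whence $\lambda(X)\le h(X)$. I expect the only real obstacle to be the orientation and sign bookkeeping underlying the reinforcement-versus-cancellation dichotomy for $d_{k-1}f$ and the vanishing of $d_{k-2}^*f$; everything else is counting that relies on completeness of the $(k-1)$-skeleton, which is precisely what makes $|B_j|=\prod_{i\neq j}|A_i|$ hold and the denominator of $h(X)$ meaningful.
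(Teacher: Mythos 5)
Your proposal is correct and follows essentially the same route as the paper: the identical test cochain $f(\sigma)=(-1)^j|A_j|$ built from the optimal partition, the same reinforcement/cancellation computation showing $\|\delta_{k-1}f\|^2=|V|^2|F(A_0,\dots,A_k)|$ (the paper's Lemma~\ref{lemma_deltaf}), and the same facts that $f\in Z_{k-1}=(B^{k-1})^\perp$ with $\|f\|^2=|V|\prod_i|A_i|$ (the paper's Lemma~\ref{lemma_complete}), combined via the Rayleigh quotient.
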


Here, we present two ways to extend this result to \mbox{$k$-d}i\-men\-sion\-al complexes with non-complete $(k-1)$-skeleton, addressing an open question that was posed in \cite{Parzanchevski:2012}. Both proofs allow for an additional strengthening of the original result.

\paragraph{} To make an extension to arbitrary complexes possible, it is necessary to adapt the definition of $h(X)$, as it is easily seen that $h(X)$ as defined above is non-zero only for $k$-dimensional $X$ with complete $(k-1)$-skeleton.
For any \mbox{$k$-d}i\-men\-sion\-al complex $X$, define its \emph{$k$-dimensional completion} as $K(X):=X \cup \{\tau^{\partial} \in \binom{V}{k+1} \colon \tau^{\partial}\setminus\{v\} \in X \text{ for all } v \in \tau^{\partial}\}$. If $X$ has a complete $(k-1)$-skeleton, we get $K(X) = K_n^k$, the complete $k$-dimensional complex on $n$ vertices.
We then define, as suggested in \cite{Parzanchevski:2012},
\[
h(X) := \min_{{V=\amalg_{i=0}^k A_i} \atop {A_i \neq \emptyset}} \frac{|V||F(A_0,A_1,\dots,A_k)|}{|F^{\partial}(A_0,A_1,\dots,A_k)|},
\]
where $F^{\partial}(A_0,A_1,\dots,A_k)$ is defined as
\[
\{\tau^{\partial} \in \tbinom{V}{k+1} \colon \tau^{\partial} \in K(X), |\tau^{\partial} \cap A_i| = 1 \text{ for } i=0,1,\ldots,k\}.
\]
Note that this is the set corresponding to $F(A_0,A_1,\dots,A_k)$ in the completion $K(X)$ -- and hence the largest possible set of $k$-simplices with one vertex in each $A_i$ in a simplicial complex with the $(k-1)$-skeleton of $X$.
For a partition with \mbox{$|F^{\partial}(A_0,A_1,\dots,A_k)| = 0$} define $\frac{|V||F(A_0,A_1,\dots,A_k)|}{|F^{\partial}(A_0,A_1,\dots,A_k)|} := \infty$.
Our first result is as follows:
\begin{theorem} \label{thm:sparse}
Let $X$ be a $k$-dimensional simplicial complex and let $A_0,A_1,\dots,A_k$ be a partition of $V$ that minimizes $h(X)$. 
Let
\[
C(X) := \max\biggl\{\sum_{v \in \tau^{\partial}} d(\tau^{\partial}\setminus\{v\}): \tau^{\partial} \in F^{\partial}(A_0,A_1,\dots,A_k)\biggr\},
\]
where $
d(\sigma) := |\{\tau^{\partial} \in F^{\partial}(A_0,A_1,\dots,A_k): \sigma \subseteq \tau^{\partial}\}|
$ for a $(k-1)$-face $\sigma \in X_{k-1}$.
Then \[\lambda(X) \leq \frac{C(X)}{|V|}h(X).\]
In particular, if every $(k-1)$-face  is contained in at most $C$ $k$-faces of $K(X)$, i.e., 
\[\forall \sigma \in X_{k-1}: |\{\tau \in K(X) :\sigma \subseteq \tau \}| \leq C,\]
then $C(X) \leq (k+1) C$ and hence 
\[\lambda(X) \leq \frac{(k+1)C}{|V|}\cdot  h(X).\]
\end{theorem}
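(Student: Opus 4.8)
The plan is to bound $\lambda(X)$ from above by exhibiting one explicit test cochain and estimating its Rayleigh quotient, mimicking the classical proof of $\lambda(G)\le h(G)$ and its extension in \cite{Parzanchevski:2012}. Since $\lambda(X)$ is the smallest eigenvalue of $L^\up_{k-1}(X)=d^{*}d$ on $(B^{k-1}(X;\R))^{\perp}$, the variational principle gives
\[
\lambda(X)=\min_{0\ne g\perp B^{k-1}}\frac{\|dg\|^{2}}{\|g\|^{2}},
\]
where $d=d_{k-1}$ is the coboundary map, so it suffices to produce a single $g\in(B^{k-1})^{\perp}$ with Rayleigh quotient at most $\tfrac{C(X)}{|V|}h(X)$. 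Fix the minimizing partition $V=\amalg_{i=0}^{k}A_{i}$ and orient every simplex by listing its vertices in order of increasing part index. Let $\mathcal S$ be the set of $(k-1)$-faces of $X$ with exactly one vertex in each part except for a single missed part, and for $\sigma\in\mathcal S$ missing $A_{i}$ set $f(\sigma):=(-1)^{i}|A_{i}|$, with $f\equiv 0$ off $\mathcal S$. Writing $\delta$ for the coboundary operator of the completion $K(X)$ (which restricts to $d$ on the faces of $X$), a direct telescoping of the alternating facet sum should show that for every rainbow set $\tau^{\partial}\in F^{\partial}(A_{0},\dots,A_{k})$ one has the combinatorial identity $|(\delta f)(\tau^{\partial})|=|V|$, while on a non-rainbow $k$-face the two facets lying in $\mathcal S$ contribute with opposite orientation and cancel. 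Hence $df$ is supported exactly on $F(A_{0},\dots,A_{k})=F^{\partial}\cap X_{k}$ and equals $\pm|V|$ there, so $\|df\|^{2}=|V|^{2}\,|F(A_{0},\dots,A_{k})|$.

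I then take $g:=f-\Pi f$, where $\Pi$ is the orthogonal projection onto $B^{k-1}$, so that $g\in(B^{k-1})^{\perp}$ is a legitimate test cochain. Because $B^{k-1}\subseteq\ker d$, one has $dg=df$, so the numerator is unchanged. The decisive point, which goes beyond the complete-skeleton case, is that although $g$ (unlike $f$) need not respect the partition structure, the correction $\Pi f$ is a coboundary; since $K(X)$ shares the $(k-2)$- and $(k-1)$-skeleta of $X$ and $\delta\circ d_{k-2}=0$, the element $\Pi f$ is a cocycle of $K(X)$, i.e.\ $\delta(\Pi f)=0$. Therefore $(\delta g)(\tau^{\partial})=(\delta f)(\tau^{\partial})$ still has absolute value $|V|$ for \emph{all} $\tau^{\partial}\in F^{\partial}$, not merely for the actual faces in $F$.

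The denominator estimate now follows by weighting. Each facet $\tau^{\partial}\setminus\{v\}$ with $\tau^{\partial}\in F^{\partial}$ satisfies $d(\tau^{\partial}\setminus\{v\})\ge 1$, so the Cauchy--Schwarz inequality applied on each $\tau^{\partial}$ with weights $d(\tau^{\partial}\setminus\{v\})$ gives
\[
|V|^{2}\le\Bigl(\sum_{v\in\tau^{\partial}}d(\tau^{\partial}\setminus\{v\})\Bigr)\sum_{v\in\tau^{\partial}}\frac{g(\tau^{\partial}\setminus\{v\})^{2}}{d(\tau^{\partial}\setminus\{v\})}\le C(X)\sum_{v\in\tau^{\partial}}\frac{g(\tau^{\partial}\setminus\{v\})^{2}}{d(\tau^{\partial}\setminus\{v\})}.
\]
Summing over $\tau^{\partial}\in F^{\partial}$ and double counting, each $\sigma\in\mathcal S$ occurring as a facet of exactly $d(\sigma)$ members of $F^{\partial}$, yields $\|g\|^{2}\ge\sum_{\sigma\in\mathcal S}g(\sigma)^{2}\ge\tfrac{|V|^{2}}{C(X)}\,|F^{\partial}|$. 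Combining with the numerator,
\[
\lambda(X)\le\frac{\|dg\|^{2}}{\|g\|^{2}}\le\frac{|V|^{2}\,|F|}{|V|^{2}|F^{\partial}|/C(X)}=\frac{C(X)\,|F|}{|F^{\partial}|}=\frac{C(X)}{|V|}h(X).
\]
The final assertion is then immediate: if every $(k-1)$-face lies in at most $C$ faces of $K(X)$ then $d(\sigma)\le C$ for all $\sigma$, whence $C(X)=\max_{\tau^{\partial}}\sum_{v\in\tau^{\partial}}d(\tau^{\partial}\setminus\{v\})\le(k+1)C$.

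I expect the orientation and sign bookkeeping in the telescoping identity to be the most delicate part: one must check both that $\delta f=\pm|V|$ on every rainbow set and that $df$ genuinely vanishes on all non-rainbow $k$-faces of $X$. The conceptual crux, however, is the observation that the projection correction $\Pi f$ is a cocycle of $K(X)$; this is precisely what rescues the denominator bound once one is forced to replace $f$ by its projection $g$ onto $(B^{k-1})^{\perp}$, and it is the step that the complete-skeleton argument of \cite{Parzanchevski:2012} obtains for free.
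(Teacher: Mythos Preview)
Your proposal is correct and follows essentially the same route as the paper: define the same test cochain $f$, project to $g=f-\Pi f\in (B^{k-1})^{\perp}$, and bound $\|g\|^{2}$ from below via the weighted Cauchy--Schwarz inequality on each $\tau^{\partial}\in F^{\partial}$, using that $\delta_{K(X)}g=\delta_{K(X)}f$ has absolute value $|V|$ there. Your phrasing of the key step---that $\Pi f\in B^{k-1}(X)=B^{k-1}(K(X))$ is a cocycle of $K(X)$ because $X$ and $K(X)$ share their $(k-2)$- and $(k-1)$-skeleta---is exactly the paper's observation that the formal extension of incidence numbers to ``empty'' $k$-faces still satisfies $\delta_{k-1}\delta_{k-2}=0$, just stated more conceptually.
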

The latter inequality is useful, since in general $C(X)$ is hard to compute.
Note that $d(\sigma)$ as well as $C(X)$ depend on the minimizing partition $A_0,A_1,\dots,A_k$, which is not necessarily unique. However, for better readability we omit the partition from the notation.

Observe that if $A_l$ is the unique block not containing a vertex of $\sigma$, then $d(\sigma) \leq |A_l|$ and this bound is tight for $k$-complexes $X$ with complete $(k-1)$-skeleton. So by definition $C(X) \leq |V|$ and Theorem~\ref{thm:sparse} implies the statement of Theorem~\ref{thm:Parz} for arbitrary $k$-dimensional simplicial complexes. While $C(X)=|V|$ for $X$ with complete $(k-1)$-skeleton, in extreme cases $C(X)$ can be arbitrary small compared to~$|V|$.  

\paragraph{} Our second result strengthens Theorem~\ref{thm:Parz} in a different way. It is possible to rephrase $h(X)$ in terms of $\Z_2$-coboundaries as follows (see Section~\ref{sec:Prelims} for the necessary definitions):
For a partition $V=\amalg_{i=0}^k A_i$ let $F(A_0,A_1,\ldots,A_{k-1})$ be the set of $(k-1)$-dimensional faces of $X$ with exactly one vertex in each set $A_i$, $i=0,1,\ldots,k-1$. Let $\1_{F(A_0,A_1,\ldots,A_{k-1})}$ be its characteristic function, interpreted as a $\Z_2$-cochain.
Then the support of the $\Z_2$-coboundary $\delta_X \1_{F(A_0,A_1,\ldots,A_{k-1})}$ in $X$, the set of $k$-simplices of $X$ with an odd number of $(k-1)$-faces in $F(A_0,A_1,\ldots,A_{k-1})$, is exactly the set $F(A_0,A_1,\ldots,A_k)$. The corresponding coboundary $\delta_{K(X)} \1_{F(A_0,A_1,\ldots,A_{k-1})}$ in $K(X)$ has support $F^{\partial}(A_0,A_1,\ldots,A_k)$. Thus,

\[
h(X) = \min_{{V=\amalg_{i=0}^k A_i} \atop {A_i \neq \emptyset}}\frac{|V| \cdot |\delta_X \1_{F(A_0,A_1,\ldots,A_{k-1})}|}{|\delta_{K(X)} \1_{F(A_0,A_1,\ldots,A_{k-1})}|},
\]
where $|\cdot|$ denotes the Hamming norm.
In order to strengthen the bound on $\lambda(X)$ given by Theorem~\ref{thm:Parz}, we define
\[
h'(X) := \min_{\substack{V=\amalg_{i=0}^{k-1} A_i,\, f \in C^{k-1}(X,\Z_2),\\\supp(f)\subset F(A_0,A_1,\ldots,A_{k-1})}} \frac{|V| \cdot |\delta_X f|}{|\delta_{K(X)} f|}.
\]
If $|\delta_{K(X)} f| = 0$, we again define $\frac{|V| \cdot |\delta_X f|}{|\delta_{K(X)} f|}=\infty$.
As an illustration think of the case of dimension $k=2$. Then the cochains $f$ considered here are those whose support describes the edge set of a bipartite graph. We compare the number of triangles in $X$ with an odd number of egdes, i.e., exactly one edge, in the support of $f$ with the number of possible such triangles.

Note that here we consider partitions of $V$ into $k$ parts. For a partition $V=\amalg_{i=0}^k A_i$ into $k+1$ parts, clearly 
\[
\supp(\1_{F(A_0,A_1,\ldots,A_{k-1})})\subset F(A_0,A_1,\ldots,A_{k-2},A_{k-1}\cup A_k).
\]
Hence, as we minimize over a larger set of cochains, we have $h'(X) \leq h(X)$. See below for an example where $h'(X) < h(X)$. We show:
\begin{theorem}\label{thm:Theorem2}
Let $X$ be a $k$-dimensional simplicial complex.
Then $\lambda(X) \leq h'(X).$
\end{theorem}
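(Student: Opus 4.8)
The plan is to exhibit, for a partition $V=\amalg_{i=0}^{k-1}A_i$ and a cochain $f$ attaining $h'(X)$, a single real test cochain $g\in C^{k-1}(X;\R)$ whose Rayleigh quotient for the upper Laplacian is bounded by $h'(X)$. Since $L^{\up}_{k-1}=\delta_X^{*}\delta_X$ and $B^{k-1}(X;\R)\subseteq\ker\delta_X$, the eigenvalue $\lambda(X)$ is given by the variational formula
\[
\lambda(X)=\min_{0\neq g\perp B^{k-1}(X;\R)}\frac{\|\delta_X g\|^2}{\|g\|^2},
\]
where $\|\cdot\|$ is the Euclidean norm on real cochains. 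Writing $g_\perp$ for the orthogonal projection of a test cochain $g$ onto $(B^{k-1}(X;\R))^\perp$ and using $\delta_X g=\delta_X g_\perp$, it suffices to produce $g$ with $g_\perp\neq 0$ and $\|\delta_X g\|^2/\|g_\perp\|^2\leq |V|\,|\delta_X f|/|\delta_{K(X)}f|$. I would obtain this from two separate estimates: an exact evaluation of the numerator $\|\delta_X g\|^2$, and a lower bound on $\|g_\perp\|^2$.

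For the test cochain I would take a signed lift of $f$: set $g(\sigma)=\varepsilon(\sigma)f(\sigma)$ for $\sigma\in F(A_0,\ldots,A_{k-1})$ and $g(\sigma)=0$ otherwise, where $\varepsilon(\sigma)=\pm1$ is the sign of the permutation reordering the vertices of the rainbow face $\sigma$ from the order prescribed by the blocks $A_0,\ldots,A_{k-1}$ into increasing vertex order. The role of this sign is a cancellation that is the technical heart of the argument. Any $k$-face $\tau$ has either zero or exactly two facets in $F(A_0,\ldots,A_{k-1})$; the latter occurs precisely when the vertices of $\tau$ meet one block in two vertices $p,q$ and every other block once, in which case the two rainbow facets are $\tau\setminus\{p\}$ and $\tau\setminus\{q\}$. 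A cofactor-type parity computation shows that $\varepsilon$ is chosen exactly so that the two corresponding terms of the real coboundary $\delta g(\tau)$ carry opposite signs. Hence $\delta g(\tau)=0$ whenever both rainbow facets lie in $\supp(f)$, while $(\delta g(\tau))^2=1$ whenever exactly one of them does, and $\delta g(\tau)=0$ if $\tau$ has no rainbow facet. This matches, face by face, the $\Z_2$-parity defining $\delta_X f$, so $\|\delta_X g\|^2=|\delta_X f|$; the identical reasoning carried out in the completion yields $\|\delta_{K(X)}g\|^2=|\delta_{K(X)}f|$.

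For the denominator I would bound $\|g_\perp\|^2$ from below by comparing $K(X)$ with the complete $k$-complex $K_n^k$ on $V$, where $n=|V|$. As $K(X)$ is a subcomplex of $K_n^k$ sharing the $(k-1)$-skeleton of $X$, extending $g_\perp$ by zero to $C^{k-1}(K_n^k;\R)$ and discarding the extra non-negative terms gives $\|\delta_{K(X)}g_\perp\|^2\leq\|\delta_{K_n^k}g_\perp\|^2$. The complete complex has all nonzero Laplacian eigenvalues equal to $n$, so $\|\delta_{K_n^k}h\|^2\leq n\|h\|^2$ for every cochain $h$; applied to $h=g_\perp$ and combined with $\delta_{K(X)}g_\perp=\delta_{K(X)}g$ this yields
\[
\|g_\perp\|^2\geq\frac{\|\delta_{K(X)}g\|^2}{|V|}=\frac{|\delta_{K(X)}f|}{|V|}.
\]
In particular, whenever $f$ is chosen with $|\delta_{K(X)}f|>0$ (which holds at the minimizer as long as $h'(X)<\infty$), we have $g_\perp\neq0$.

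Combining the two estimates gives $\|\delta_X g\|^2/\|g_\perp\|^2\leq |V|\,|\delta_X f|/|\delta_{K(X)}f|=h'(X)$, and hence $\lambda(X)\leq h'(X)$ by the variational formula. I expect the main obstacle to be the sign lemma of the second paragraph: one must fix $\varepsilon$ canonically and verify, through careful permutation-parity bookkeeping on the two rainbow facets of a $k$-face, that the real coboundary cancels exactly on the ``both facets present'' configuration. Everything else---the variational characterization, the subcomplex comparison via extension by zero, and the spectral fact that the complete complex has all nonzero Laplacian eigenvalues equal to $|V|$---is routine once this cancellation is established.
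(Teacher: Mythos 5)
Your proposal is correct and follows essentially the same route as the paper: your signed lift $g(\sigma)=\varepsilon(\sigma)f(\sigma)$ is exactly the paper's cochain after its block-compatible reordering of the vertices (Lemma~\ref{lem:connectionL - Z2}), and your lower bound on $\|g_\perp\|^2$ via extension by zero and the identity $L^\up_{k-1}(K_n^k)+L^\down_{k-1}(K_n^k)=nI$ is the content of Lemma~\ref{lem:Reformulation}. The only cosmetic difference is that you need just the inequality $\langle L^\up_{k-1}(K_n^k)h,h\rangle\leq n\|h\|^2$ rather than the equality on $(B^{k-1})^\perp$, which slightly streamlines that step.
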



\subsection*{Discussion of Results.}
\begin{enumerate}

\item The inspiration for the definition of $h'(X)$ is a different analogue of the Cheeger constant for graphs, introduced by Gromov and independently by Linial, Meshulam and Wallach. 
It is based on $\mathbb{Z}_2$-cohomology and emerged in various contexts as a useful notion, see, e.g., \cite{Gromov:2010,LinialMeshulam:2006, MeshulamWallach:2009}.
For a $k$-complex with complete $(k-1)$-skeleton, this notion can be described\footnote{\label{foot2}Usually, one considers a different notion. For $k$-complexes with complete $(k-1)$-skeleton, the two notions are closely related, see Section~\ref{sec:DiffNotions} for more details.}
by 
\[
\min_{f \in C^{k-1}(X,\Z_2)} \frac{|V| \cdot |\delta_X f|}{|\delta_{K_n^k} f|},
\]
 similar to the definitions of $h(X)$ and $h'(X)$, but without any restriction on the cochains considered.
As this seems to be an important and useful concept, one might wish for an inequality as in Theorems~\ref{thm:Parz}, \ref{thm:sparse} and \ref{thm:Theorem2} also for this notion of expansion. It was, however, shown that such an inequality can not exist, see \cite{GundertWagner:2012,Steenbergen:2012}.

So far -- except for a technical explanation, see Lemma~\ref{lem:connectionL - Z2} -- we have no deeper insight into why the Cheeger inequality does hold when the definition of expansion is restricted to the special class of cochains appearing in the definition of~$h'(X)$.

\item Theorem~\ref{thm:sparse} and Theorem~\ref{thm:Theorem2} can indeed give a stronger bound than Theorem~\ref{thm:Parz}, see Section~\ref{sec:Examples} for examples that also show that it depends on the complex $X$ whether $h'(X)$ or $\frac{C(X)}{|V|}h(X)$ presents the stronger upper bound on $\lambda(X)$.

\item Recall that the Cheeger inequality for graphs also gives an upper bound of $h(G)$ in terms of $\lambda(G)$. As $\lambda(X) = 0$ does not imply $h(X) =0$, see  \cite{Parzanchevski:2012}, a higher-dimensional analogue of this upper bound of the form $C \cdot h(X)^m \leq \lambda(X)$ is hence not possible.
\end{enumerate}

\section{Preliminaries}\label{sec:Prelims}


\paragraph{Graph Laplacian.}
Let $G=(V,E)$ be a finite simple undirected graph, $|V| =n$.
The \emph{Laplacian} of $G$ is the $|V| \times |V|$-matrix  
\[
L(G) = D(G) - A(G).
\]
Here, $A(G)$ is the \emph{adjacency matrix} -- given by $A_{u,v}=1$ if and only if $\{u,v\}\in E$ -- and $D(G)$ is the diagonal matrix with entries $D_{v,v}=\deg_G(v)$, the \emph{degrees} of the vertices.

The Laplacian is a symmetric positive semi-definite matrix and hence has $n$ real non-negative eigenvalues. As $L\1 = 0$, the smallest eigenvalue is always $0$, and we denote by $\lambda(G)$ the second smallest eigenvalue of $L(G)$. A graph $G$ is connected if and only if $\lambda(G)$ is non-zero (see, e.g., \cite{HooryLinialWigderson:2006}).





\paragraph{Simplicial Complexes.}
Let $V$ be a finite set. A \emph{(finite abstract) simplicial complex} (or \emph{complex}) $X$ with vertex set $V$ is a collection of subsets of $V$ that is closed under taking subsets, i.e., $\sigma \subseteq \tau \in X$ implies $\sigma \in X$.

An element $\tau \in  X$ is called a \emph{simplex} or \emph{face} of $X$, the \emph{dimension} of $\tau$ is $\dim\tau=|\tau|-1$. A simplex $\tau$ with $\dim \tau =i$ is also called an \emph{$i$-simplex}. The \emph{dimension} of the complex $X$ is $\dim X = \max_{\tau \in X} \dim \tau$. A simplicial complex of dimension $k$ is called a \emph{$k$-dimensional simplicial complex} or a \emph{$k$-complex}.
The one-element sets $\{v\}$, $v \in V$, are the \emph{vertices} of $X$. We identify the singleton $\{v\}$ with its unique element~$v$.
For an $(i-1)$-simplex $\sigma$ the \emph{degree} of $\sigma$ is defined as $\deg \sigma = |\{\tau \supseteq \sigma | \dim \tau = i\}|$.
The set of all $i$-simplices of $X$ is denoted by $X_i$, the collection of all simplices of dimension at most $i$, the \emph{$i$-skeleton} of $X$, by $X^{(i)}$. 
The \emph{complete $k$-complex} $K_n^k$ has vertex set $V = [n] = \{1,\ldots,n\}$ and $X_i=\binom{[n]}{i+1}$ for all $i \leq k$. 

\paragraph{Cohomology.}
Let $X$ be a $k$-dimensional simplicial complex with vertex set $V$ and assume that we have a fixed linear ordering on $V$. We consider the faces of $X$ with the \emph{orientation} given by the order of their vertices.

Formally, consider an $i$-simplex $\tau = \{v_0,v_1,\dots,v_i\}$ where $v_0 < v_1 < \dots <v_i$. For an $(i-1)$-simplex $\sigma \in X_{i-1}$ the \emph{oriented incidence number} $[\tau:\sigma]$ is defined as $(-1)^j$ if $\sigma = \tau \setminus \{v_j\}$, for some $j = 0,1,\dots,i$ and zero otherwise, i.e., if $\sigma \nsubseteq \tau$.
In particular for $v \in X_0$ and the unique face $\emptyset \in X_{-1}$ we have $[v:\emptyset] = 1$.

Let $\G$ be an Abelian group (we will be concerned with the cases $\G = \mathbb{Z}_2$ and $\G = \R$).
The \emph{group of $i$-dimensional cochains} on $X$ (with coefficients in $\G$) is 
\[
C^i(X,\G): = \{ f: X_i \to \G\},
\]
i.e., the group of maps from the set of $i$-simplices to $\G$.
For $i < -1$ or $i > \dim X$ we conveniently define $C^i(X,\G) =0$.
Note that since the empty set is the unique element of $X_{-1}$ we have $C^{-1}(X,\G) \cong \G$. 
The characteristic functions $e_{\tau}$ of faces $\tau \in X_i$ form a basis of $C^i(X, \G)$, they are called the \emph{elementary cochains}.

The \emph{coboundary operator} $\delta_{i} \colon C^{i}(X,\G) \rightarrow C^{i+1}(X,\G)$ is the linear function given by
\[ \delta_{i}f (\tau):= \sum_{\sigma \in X_{i}} [\tau:\sigma]f(\sigma),\]
for $\tau$ an $(i+1)$-simplex, $f \in C^{i}(X,\G)$ and $-1\leq i < \dim X$. We let $\delta_i =0$ otherwise.
Define $Z^i(X;\G) := \ker \delta_i$  the group of $i$-dimensional \emph{cocycles} and
$B^i(X;\G) := \im \delta_{i-1}$ the group of $i$-dimensional \emph{coboundaries}.
A straightforward calculation shows that \mbox{$\delta_i \delta _{i-1}=0$}, i.e., $B^i(X;\G) \subseteq Z^i(X;\G)$.
Hence, we can define the (reduced) \emph{$i$-th cohomology group with coefficients in $\G$} as 
\[
\tilde{H}^i(X;\G) := Z^i(X;\G)/ B^i(X;\G).
\]

\paragraph{Real Coefficients and Higher-Dimensional Laplacians.}
We endow $C^i(X;\R)$ with the \emph{inner product} 
\[
\langle f, g \rangle = \sum_{\tau \in X_i}f(\tau)g(\tau)
\]
for $f,g \in C^i(X;\R)$ and denote the dual operator of $\delta_{i-1}$ by $\partial_i\colon C^i(X;\R) \rightarrow C^{i-1}(X;\R)$, i.e., for $f \in C^i(X;\R)$ and $g \in C^{i-1}(X;\R)$ we have 
\[
\langle \partial_i f, g \rangle = \langle f, \delta_{i-1} g \rangle.
\]
The map $\partial_i$ is also called the \emph{boundary operator} and the groups \mbox{$Z_i(X;\R) := \ker \partial_i$} and \mbox{$B_i(X;\R) := \im \partial_{i+1}$} are the group of $i$-di\-men\-sion\-al \emph{cycles} and the group of $i$-di\-men\-sion\-al \emph{boundaries}, respectively.

Setting $\mathcal{H}_i=\mathcal{H}_i(X;\R):= Z_i(X;\R) \cap Z^i(X;\R)$, one gets a \emph{Hodge decomposition} of the vector space $C^i(X;\R)$ into pairwise orthogonal subspaces
\begin{equation}\label{eqn:hodge-decomp}
 C^i(X;\R)=\mathcal{H}_i\oplus B^i(X;\R)\oplus B_i(X;\R),
\end{equation}
in particular, $\mathcal{H}_i\cong H^i(X;\R)$ (see \cite{Eckmann:1945,HorakJost}).

The higher-dimensional analogue of the graph Laplacian is based on these notions.
From now on, write $C^i$ for $C^i(X;\R)$, $B^i$ for $B^i(X;\R)$ and $Z_i$ for $Z_i(X;\R)$.
The \emph{upper, lower} and \emph{full Laplacian}
$L_i^{\text{up}}(X), L_i^{\text{down}}(X), L_i(X) \colon C^{i} \to C^{i}$
in dimension $i$ are defined as 
\[
L_{i}^{\text{up}}(X) := \partial_{i+1} \delta_{i}, \quad
L_{i}^{\text{down}}(X) := \delta_{i-1} \partial_{i} \quad \text{and}\quad
L_{i}(X) := L_{i}^{\text{up}}(X) + L_{i}^{\text{down}}(X),
\]
respectively.
We solely focus on the case $i=k-1$.

Analogously to the case of graphs ($k=1$) we can express $L^\up_{k-1}(X)$ as a matrix: With respect to the orthogonal basis of elementary cochains it corresponds to the matrix 
\[
L^\up_{k-1}(X) = D_{k-1}(X) - A_{k-1}(X).
\]
Here we let $D_{k-1}(X)$ denote the diagonal matrix with entry $(D_{k-1}(X))_{\tau,\tau}=\deg(\tau)$ for $\tau \in X_{k-1}$ and define the 
\emph{adjacency matrix} $A_{k-1}(X)$ by 
\[
 (A_{k-1}(X))_{\tau,\tau'} = \begin{cases}
                -[\tau \cup \tau':\tau][\tau \cup \tau':\tau']& \text{if $\tau \sim \tau'$},\\
		        0& \text{otherwise,}
               \end{cases}
\]
where $\tau,\tau' \in X_{k-1}$ and we write $\tau \sim \tau'$ if $\tau$ and $\tau'$ share a common $(k-2)$-face and $\tau \cup \tau' \in X_{k}$. Note that for a graph $G$, we get $-[\{v,v'\}\!:\!v][\{v,v'\}\!:\!v']=1$ for all non-zero entries of $A_0(G)$.
This shows that $A_0(G)=A(G)$ and that $L^\up_{0}(G)$ agrees with the Laplacian $L(G)$.
For $k>1$, non-zero entries of $A_{k-1}(X)$ do not necessarily have the same sign.

Note that $L_{k-1}^{\text{up}}(X)$ (as well as $L_{k-1}^{\text{down}}(X)$ and $L_{k-1}(X)$) is a \emph{self-adjoint} and \emph{positive semidefinite} linear operator. 
It is furthermore not hard to see that $\ker L_{k-1}^{\text{up}}(X)=Z^{k-1}$. As $B^{k-1}\!\!\subseteq\!\!Z^{k-1}$, this implies that $L_{k-1}^{\text{up}}(X)$ is zero on $B^{k-1}$. Hence, non-zero eigenvalues can only occur in $(B^{k-1})^{\bot}$ and we define the \emph{spectral gap} of $X$ as 
\begin{linenomath}
\begin{equation*}
\lambda(X) := \text{minSpec}(L_{k-1}^\up(X)|_{(B^{k-1})^{\bot}}) 
=\text{minSpec}(L_{k-1}^\up(X)|_{Z_{k-1}}),
\end{equation*}
\end{linenomath}

where the equality holds because  we have $Z_{k-1}=(B^{k-1})^{\bot}$ by the Hodge decomposition \eqref{eqn:hodge-decomp}.
We remark that even though the spaces $B^{k-1}$ and $Z_{k-1}$ depend on the choice of orientations for the faces of $X$, the spectrum of $L_{k-1}^\up$ and the value of $\lambda(X)$ do not.

Note that $\lambda(X)$ is also the minimal eigenvalue of the full Laplacian $L_{k-1}(X)$ on $Z_{k-1}$, since $Z_{k-1} = \ker L_{k-1}^{\text{down}}(X)$.
We have $\lambda(X) =0$, i.e., there exist more zero eigenvalues than the ones corresponding to functions in $B^{k-1}$, if and only if \mbox{$H^{k-1}(X;\R) \neq 0$}.

For a graph $G$ the space $B^{0}$ is the space of constant functions, spanned by the all-ones vector $\1$, so this definition of the spectral gap coincides with $\lambda(G)$ as defined previously.

\section{Different Notions of Higher-Dimensional Expansion}\label{sec:DiffNotions}
The notions of expansion considered here are based on the Cheeger constant for graphs defined as
$
h(G) := \min_{{A \subseteq V} \atop {0 <|A| <|V|}}\frac{|V||E(A,V\setminus A)|}{|A||V\setminus A|}.
$
Often the Cheeger constant is defined differently by
\[
\phi (G) = \min_{A \subseteq V,0 < |A| \leq |V|/2}\frac{|E(A,V\setminus A)|}{|A|}.
\]
Since $\phi(G) \!\leq \!h(G)\! \leq \!2\phi(G)$, these two concepts are closely related. Both are also called \emph{(edge) expansion ratio}.

An analogue of the latter graph parameter for higher dimensional simplicial complexes was introduced by Gromov and independently by Linial, Meshulam and Wallach.
For a $k$-dimensional complex $X$ define
\[
\phi(X):=\min_{f \in C^{k-1}(X,\Z_2)} \frac{|\delta_X f|}{|[f]|}
,\]
 where $|[f]| = \min\{|f+\delta_X g|\colon g\in C^{k-2}(X;\Z_2)\}$, see, e.g., \cite{Gromov:2010,LinialMeshulam:2006, MeshulamWallach:2009}. Here, $\delta_X$ denotes the $\Z_2$-coboundary operator of the complex $X$.
 For $f$ with $|[f]| =0$, define  $\frac{|\delta_X f|}{|[f]|} = \infty$.
Note that $\phi(X) = 0$ if and only if the complex $X$ has a non-trivial $\Z_2$-cocycle, i.e., iff $\tilde{H}^{k-1}(X;\Z_2)\neq 0$.
 
Our definition of $h'(X)$ is inspired by the following parameter, generalizing the definition of $h(G)$ for graphs 
 \[
\tilde{h}(X) := \min_{f \in C^{k-1}(X,\Z_2)} \frac{|V| \cdot |\delta_X f|}{|\delta_{K(X)} f|}.
\]
\paragraph{Complexes with Complete $(k-1)$-Skeleton.}
For $k$-complexes $X$ with a complete $(k-1)$-skeleton the two parameters $\phi(X)$ and $\tilde{h}(X)$ have a close relation, analogous to the situation for graphs.
This close relation follows from a basic observation concerning the expansion of the complete complex that was made independently in different contexts, see, e.g., \cite{Gromov:2010,MeshulamWallach:2009}:
\begin{proposition}[E.g.,~{\cite[Proposition 2.1]{MeshulamWallach:2009}}]
 \[|\delta_{K_n^k} f| \geq \tfrac{n}{k+1}|[f]| \text{ for any } f \in C^{k-1}(K_n^k,\Z_2).\]
\end{proposition}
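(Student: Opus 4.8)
The plan is to exploit the fact that the complete complex $K_n^k$ is cochain-theoretically contractible onto any single vertex, and then to average the resulting bound over all $n$ choices of that vertex.

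First, for each vertex $v \in [n]$ I would introduce a cone (contraction) operator $h_v \colon C^{j}(K_n^k,\Z_2) \to C^{j-1}(K_n^k,\Z_2)$, defined on a $(j-1)$-face $\rho$ by $(h_v f)(\rho) = f(\rho \cup \{v\})$ if $v \notin \rho$ and $(h_v f)(\rho) = 0$ otherwise. The central step is to verify the homotopy identity
\[
\delta_{K_n^k} h_v + h_v \delta_{K_n^k} = \mathrm{id}
\]
on cochains of the complete complex. This is a direct $\Z_2$-computation that splits into the cases $v \in \sigma$ and $v \notin \sigma$ for a top face $\sigma$; in each case the incidence terms telescope and cancel modulo $2$, leaving exactly $f(\sigma)$. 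Completeness of the skeleton is essential here, since $h_v$ is only well defined because every face $\rho \cup \{v\}$ is present in $K_n^k$.

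Applying the identity to the given $f \in C^{k-1}(K_n^k,\Z_2)$ yields $f = \delta_{K_n^k}(h_v f) + h_v(\delta_{K_n^k} f)$, where $h_v f \in C^{k-2}$, so that $\delta_{K_n^k}(h_v f)$ is a coboundary. Hence $h_v(\delta_{K_n^k} f)$ lies in the same coset $[f]$, giving $|[f]| \le |h_v(\delta_{K_n^k} f)|$. Unwinding the definition of $h_v$, the support of $h_v(\delta_{K_n^k} f)$ is in bijection (via $\rho \mapsto \rho \cup \{v\}$) with the set of $k$-faces in $\supp(\delta_{K_n^k} f)$ that contain $v$, so
\[
|[f]| \le |\{\tau \in \supp(\delta_{K_n^k} f) : v \in \tau\}|.
\]

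Finally I would sum this inequality over all $v \in [n]$ and double count on the right: each $k$-face in $\supp(\delta_{K_n^k} f)$ has exactly $k+1$ vertices, so the total equals $(k+1)\,|\delta_{K_n^k} f|$, producing $n\,|[f]| \le (k+1)\,|\delta_{K_n^k} f|$, which rearranges to the claimed bound. I expect the only genuinely delicate point to be the bookkeeping in the homotopy identity — in particular checking that the diagonal $w=v$ contribution and the extra $f(\sigma)$ arising from $h_v \delta f$ combine correctly over $\Z_2$; everything afterward is a short averaging and double-counting argument.
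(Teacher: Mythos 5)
Your proof is correct. The paper itself gives no proof of this proposition but only cites \cite{MeshulamWallach:2009}, and your argument --- the cone operator $h_v$, the homotopy identity $\delta h_v + h_v\delta = \mathrm{id}$ on the complete complex, the resulting representative $f+\delta(h_vf)=h_v(\delta f)$ of the coset $[f]$, and the final averaging over the $n$ vertices with each $k$-face counted $k+1$ times --- is exactly the standard proof from that reference, so there is nothing to add.
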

If $X$ has a complete $(k-1)$-skeleton, $C^{k-1}(X,\Z_2)=C^{k-1}(K_n^k,\Z_2)$. As trivially $|\delta_{K_n^k} f| \leq n\cdot |[f]|$ for any $f \in C^{k-1}(K_n^k,\Z_2)$, we have the following relation between the above notions of expansion:
\[
\phi(X) \leq \tilde{h}(X) \leq (k+1)\phi(X).
\]

\paragraph{General Complexes.} For general $k$-complexes $X$ with arbitrary $(k-1)$-skeleton, this close relation does not necessarily exist.
While the trivial lower bound $\phi(X) \leq \tilde{h}(X)$ continues to hold, the existence of an upper bound is not guaranteed. 
As an example consider the complex $Z$ defined in Section~\ref{sec:Examples} (see Figure~\ref{fig:moebius}). Since $Z=K(Z)$, we have $\tilde{h}(Z) = |V|$, whereas $\phi(Z)=0$, because $H^1(Z;\Z_2)\neq 0$.

If $H^1(K(X);\Z_2) = 0$, i.e., $\phi(K(X))\neq 0$, we get analogously to the case above
\[
\phi(X) \leq \tilde{h}(X) \leq \frac{|V|}{\phi(K(X))}\cdot\phi(X).
\]
Thus, the existence (and the quality) of the upper bound depends on the expansion properties of the completion $K(X)$.

One might wonder if Theorem~\ref{thm:Theorem2} can be improved to a bound of $\lambda(X)$ by 
\[\phi'(X) = \min_{\substack{V=\amalg_{i=0}^{k-1} A_i,\, f \in C^{k-1}(X,\Z_2),\\\supp(f)\subset F(A_0,A_1,\ldots,A_{k-1})}}\frac{|\delta_X f|}{|[f]|}.\]
This, however, is impossible as is shown by the counterexamples in \cite{Steenbergen:2012}, originally constructed to show that $\phi(X)$ is not an upper bound for $\lambda(X)$.
For these complexes, $\phi(X) = \phi'(X)$, since $\phi(X)$ is attained by a cochain of this special form.

\section{Examples}\label{sec:Examples}
The following examples show that Theorem~\ref{thm:sparse} and Theorem~\ref{thm:Theorem2} can indeed give a stronger bound than Theorem~\ref{thm:Parz}. The first example shows how the three values, $\lambda(X)$, $\frac{C(X)}{n}h(X)$ and $h'(X)$ can all be different and nonzero. In the second example we have $h'(X)=0$ (and therefore $\lambda(X)=0$) but $\frac{C(X)}{n}h(X)$ nonzero. The converse is not possible since $h(X) = 0$ implies $h'(X) = 0$, but we give an instance where $\frac{C(X)}{n}h(X)$ is constant, while $h'(X)$ is linear.


Consider the complex $X$ given in Figure \ref{fig:realplane}, a triangulation of the real projective plane. It has $6$ vertices, a complete $1$-skeleton and all triangles that are visible in the figure. Let $A=\{\{1,2\},\{2,4\},\{4,5\}\}$ be the edge set depicted by bold lines and let $\1_A$ be its characteristic function, interpreted as a $\Z_2$-cochain. Then $|\delta_X \1_A| = 2$ and $|\delta_{K_6^2} \1_A| = 8$ and hence $h'(X) \leq \frac{3}{2}$. By computing $L(X)$ explicitly one can see that $\lambda(X) \approx 0.764$. We will show that $h(X)  \geq 2$. Note that, since $X$ has a complete $1$-skeleton, we furthermore have $\frac{C(X)}{|V|}h(X) = h(X)$. 
%

Consider a $3$-coloring of the vertices of $X$. In the case where there exists a color class $\{v\}$ of size one, the five neighbors of $v$ (which belong to the other two colors classes) span at least two $3$-colored triangles with $v$. In the case where all color classes have size two, one can show in a similar fashion that every such $3$-coloring (one has to distinguish between two cases) has exactly four $3$-colored triangles. Therefore $h(X) \geq \frac{|V|\cdot 2}{6} = 2$. 

\begin{figure}[ht]
   \begin{center}
    \includegraphics[width=0.225\textwidth]{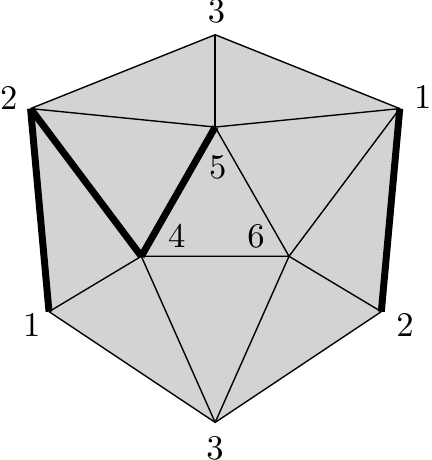}
  \end{center}
  \caption{Real Projective Plane}
\label{fig:realplane}
\end{figure}

In the second example we give a construction of a simplicial complex $Y$ where $\lambda(Y) = h'(Y) = 0$, but $\frac{C(Y)}{|V|}h(Y) = h(Y) = \frac{1}{2- \frac{6}{n}}$. Note that here $h(Y)$ is bounded by a constant, while in general a nonzero bound for $h(Y)$ can be of order $\Theta(n^{-2})$.

Let $Y=(V,E,T)$ be the following $2$-complex with complete $1$-skeleton on the vertex set $[n]$ ($n \geq 8$): We set $A = \{\{1,2\},\{2,3\},\{3,4\}\}$ and $T$ as the set of triangles that contain zero or two edges from $A$. Then by construction $|\delta_Y \1_A| = 0$ and hence $h'(Y) = 0$. Formally the set of triangles is given by 
\[ T = T^{\partial} \setminus \{\{1,2,4\}, \{1,3,4\}, \{1,2,i\}, \{2,3,i\}, \{3,4,i\}\}, \text{ where } i \in \{5,6,\dots, n\}.\]

Denote $W: = \{1,2,3,4\}$ and $\overline{W} = V \setminus W$. We determine $h(Y)$ by the following case distinction, which covers all possible cases up to permutation. Note that $C(Y) = |V|$,  since $Y$ has a complete $1$-skeleton.

\begin{enumerate}
	\item 
	$A_0$ and $A_1$ contain at most as many elements of $W$ as of $\overline{W}$ i.e.\ $|A_0 \cap W| \leq |A_0 \cap \overline{W}|$ and $|A_1 \cap W| \leq |A_1 \cap \overline{W}|$. Then $\{v_0, v_1, v_2\} \in F(A_0,A_1,A_2)$ for all $v_0 \in A_0 \cap \overline{W}$, $v_1 \in A_1 \cap \overline{W}$, $v_2 \in A_2$, and therefore
	\[\frac{|F(A_0,A_1,A_2)|}{|F^\partial(A_0,A_1,A_2)|} \geq \frac{\frac{1}{2}|A_0|\cdot \frac{1}{2}|A_1| \cdot |A_2|}{|A_0||A_1||A_2|} = \frac{1}{4}.\] 
	
	\item
	$|A_0 \cap W| \leq |A_0 \cap \overline{W}|$ but $|A_1 \cap W| > |A_1 \cap \overline{W}| \neq 0$. Since $|W| = 4$ it follows that $|A_1| \leq 7$. Hence
	\[\frac{|F(A_0,A_1,A_2)|}{|F^\partial(A_0,A_1,A_2)|} \geq \frac{\frac{1}{2}|A_0|\cdot 1 \cdot |A_2|}{|A_0||A_1||A_2|} \geq \frac{1}{14}.\]
	
	\item
	All elements of $\overline{W}$ are in $A_0$, i.e., $\overline{W} \subseteq A_0$. By checking all possible partitions, one can find that the minimum is obtained by  $A_0 = \{1,\overline{W}\}, A_1 = \{4,2\}, A_2 = \{3\}$, in which case $\frac{|F(A_0,A_1,A_2)|}{|F^\partial(A_0,A_1,A_2)|} = \frac{1}{2n-6}$.
	
	
\end{enumerate}

In our last example consider the $2$-dimensional complex $Z$ depicted in Figure \ref{fig:moebius}. It has vertex set $V = [n]$ and edge set $\{\{i,i+1 \mod n\}, \{i,i+2 \mod n\}, i \in [n]\}$. The set of triangles is $\{\{i, i+1 \text{ mod } n, i+2 \text{ mod } n\}, i \in [n]\}$. Depending on the parity of $n$, this describes either a M\"obius strip or a cylinder.
Observe that $Z=K(Z)$ and hence $h(Z) = h'(Z) = n$. 
For the partition $A_0 = \{1\}$, $A_1 = \{3\}$, $A_2 = [n]\setminus \{1,3\}$, we obtain the smallest possible value of $C(Z) = 3$.
Therefore $\frac{C(Z)}{|V|}h(Z) = \frac{3}{n}n = 3$ gives a constant bound whereas $h(Z) = h'(Z) = n$ yields a linear bound. Note that $\lambda(Z) = 0$, as $H^1(Z;\R)$ is non-trivial.

\begin{figure}[ht]
  \begin{center}
    \includegraphics[width=0.425\textwidth]{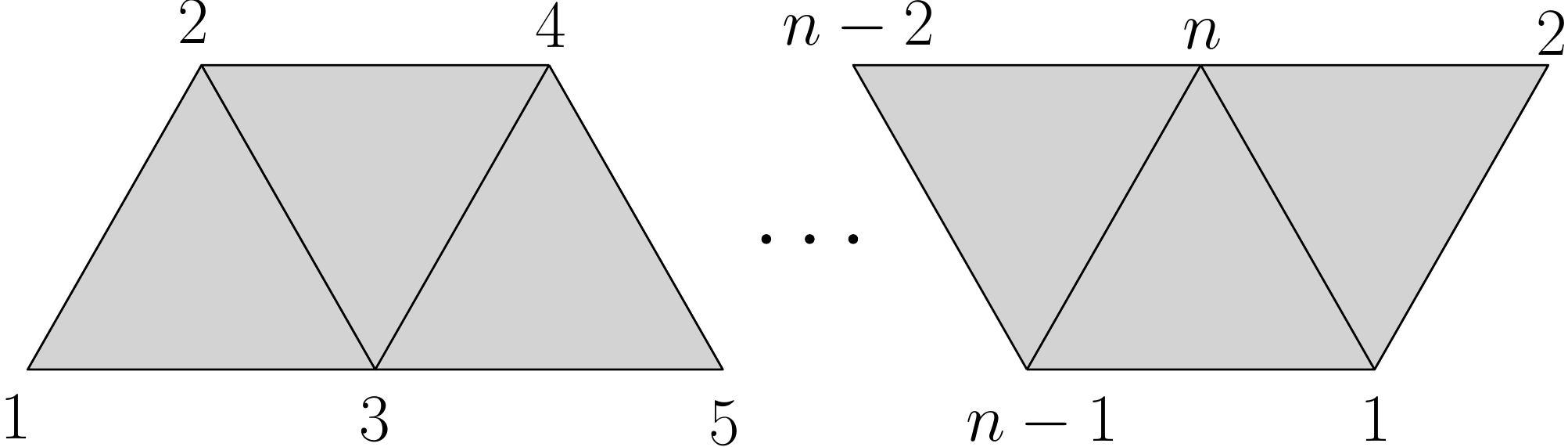}
  \end{center}
  \caption{M\"obius Strip or Cylinder}
	\label{fig:moebius}
\end{figure}

\section{The Result for Complexes with Complete Skeleton}\label{s_parzanchevski}
%

In the following part we describe the basic ideas of the proof of Theorem~\ref{thm:Parz} from \cite{Parzanchevski:2012}.
By the variational characterization of eigenvalues we know that
\begin{equation}\label{eq_ray}
\lambda(X) = \min_{f \in Z_{k-1}} \frac{\langle L_{k-1}^{\text{up}}(X)f, f \rangle}{\langle f,f \rangle}.
\end{equation}
The key idea is to find a function $f \in Z_{k-1}$ that satisfies 
\[
\frac{\langle L_{k-1}^{\text{up}}(X)f, f \rangle}{\langle f,f \rangle} = h(X).
\]
In order to define a function satisfying this equation, we fix a partition $A_0,A_1, \dots, A_k$ of $V$ which realizes the minimum in $h(X)$.
We call the $A_i$'s \emph{blocks of the partition} or shortly just \emph{blocks}.
Since the value of $\lambda(X)$ does not depend on the chosen orientation, we are free to choose an orientation depending on this partition. For reasons of simplicity we choose a linear ordering on $V$ such that for all $i<j$, $v \in A_i$, $w \in A_j$ we have $v < w$.

\label{def_fk}
Let $\sigma =\{v_0,v_1, \dots v_{k-1}\}\!\in\!X_{k-1}$, with $v_0\!<\!v_1\!<\!\dots\!<\!v_{k-1}$. Then \emph{$f \in C^{k-1}$} is defined as
\begin{equation}\label{eq:Def_f}
f(\sigma)=\begin{cases}
  (-1)^{l}|A_l| & \begin{array}{l}
  \text{if } A_l \text{ is the unique block not containing any of the } v_i,
  \end{array}\\

  0   & \begin{array}{l}\text{otherwise, i.e., if such that }v_i,v_j \in A_l.
  \end{array}
\end{cases} 
\end{equation}
%
%
%
The following two statements describing essential properties of $f$ give the proof of Theorem~\ref{thm:Parz}.
\begin{lemma} \cite{Parzanchevski:2012}
\label{lemma_complete}
Let $X$ be a $k$-dimensional simplicial complex with \emph{complete $(k-1)$-skeleton} and let $f$ be defined as above.
Then $f \in Z_{k-1}$ and 
\[
\langle f, f \rangle = |V||F^{\partial}(A_0,A_1,\dots,A_k)|=|V||A_0||A_1|\cdots|A_k|.
\]
\end{lemma}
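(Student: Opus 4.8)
The plan is to establish the two assertions separately: the norm identity is a direct counting argument, while membership in $Z_{k-1}$ is a direct computation of a boundary. For the latter, I would use that $Z_{k-1} = \ker \partial_{k-1} = (B^{k-1})^{\perp}$, so it suffices to show $\partial_{k-1} f = 0$. Expanding $f$ in elementary cochains and using that $\partial_{k-1}$ is the adjoint of $\delta_{k-2}$, the operator acts by $(\partial_{k-1} f)(\rho) = \sum_{\sigma \in X_{k-1}} [\sigma:\rho]\, f(\sigma)$ for each $(k-2)$-simplex $\rho$, so the goal is to verify this vanishes for every such $\rho$.

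For the norm, I would invoke completeness of the $(k-1)$-skeleton, so that $X_{k-1} = \binom{V}{k}$ and $\langle f,f\rangle = \sum_{\sigma} f(\sigma)^2$ ranges over all $k$-subsets of $V$. Since $\sigma$ has $k$ vertices and there are $k+1$ blocks, by pigeonhole either two vertices share a block, whence $f(\sigma)=0$, or $\sigma$ meets $k$ of the blocks in one vertex each and misses a unique block $A_l$, whence $f(\sigma)^2 = |A_l|^2$. Grouping the sum by the missing block $l$ gives $\langle f,f\rangle = \sum_{l=0}^k |A_l|^2 \prod_{i \neq l} |A_i|$, as there are $\prod_{i\neq l}|A_i|$ ways to choose one vertex from each remaining block. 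Factoring $\prod_{i=0}^k |A_i|$ from each term and using $\sum_l |A_l| = |V|$ yields $\langle f,f\rangle = |V|\prod_{i=0}^k|A_i|$; completeness also gives $|F^{\partial}(A_0,\dots,A_k)| = \prod_{i=0}^k |A_i|$, establishing the identity.

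For $f\in Z_{k-1}$, I would fix a $(k-2)$-simplex $\rho$ and evaluate $(\partial_{k-1}f)(\rho) = \sum_{u\notin\rho}[\rho\cup\{u\}:\rho]\,f(\rho\cup\{u\})$, where $u$ ranges over all remaining vertices since the skeleton is complete. If two vertices of $\rho$ lie in a common block, every extension $\rho\cup\{u\}$ retains a repeated block and $f$ vanishes on it, so the sum is $0$. Otherwise $\rho$ occupies $k-1$ distinct blocks and misses exactly two, say $A_p$ and $A_q$ with $p<q$; only $u\in A_p$ or $u\in A_q$ avoid creating a repeated block. The crucial point is the sign bookkeeping: by the block-respecting ordering the vertices of $\rho$ are sorted by increasing block index, so an inserted $u\in A_p$ lands in position $p$ (all blocks $0,\dots,p-1$ are present) while $u\in A_q$ lands in position $q-1$ (one fewer, since $p<q$ is absent from $\rho$). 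Hence $[\rho\cup\{u\}:\rho]=(-1)^p$ respectively $(-1)^{q-1}$, while $f(\rho\cup\{u\}) = (-1)^q|A_q|$ for $u\in A_p$ and $(-1)^p|A_p|$ for $u\in A_q$. Summing over the $|A_p|$ choices in $A_p$ and the $|A_q|$ choices in $A_q$ gives $(-1)^{p+q}|A_p||A_q| + (-1)^{p+q-1}|A_p||A_q| = 0$.

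I expect the sign tracking in this last step to be the main obstacle: one must pin down the insertion position of $u$ from the block-respecting order and confirm that the parity flips by exactly one between the $A_p$ and $A_q$ contributions, which is precisely what forces the cancellation. By contrast, the counting behind the norm identity is routine.
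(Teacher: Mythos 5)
Your proof is correct. Note that the paper itself does not prove Lemma~\ref{lemma_complete}: it defers to \cite{Parzanchevski:2012}, remarking only that it ``can be proven by a straightforward calculation.'' Your argument supplies exactly that calculation, and both halves check out. The norm identity follows from the grouping $\langle f,f\rangle=\sum_{l}|A_l|^2\prod_{i\neq l}|A_i|=|V|\prod_i|A_i|$ together with $|F^{\partial}(A_0,\dots,A_k)|=\prod_i|A_i|$ for complete skeleta, as you say. For $\partial_{k-1}f=0$, your sign bookkeeping is the crux and is right: with the block-respecting order, a vertex $u\in A_p$ inserted into $\rho$ sits at position $p$ (blocks $0,\dots,p-1$ each contribute one earlier vertex of $\rho$), while $u\in A_q$ sits at position $q-1$ (block $p<q$ is absent from $\rho$), so the two families of terms carry signs $(-1)^{p+q}$ and $(-1)^{p+q-1}$ and cancel after summing over the $|A_p|$ and $|A_q|$ choices of $u$. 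Completeness of the $(k-1)$-skeleton is used exactly where it must be: to ensure every extension $\rho\cup\{u\}$ is a face of $X$ (and every $k$-subset of $V$ appears in the norm sum), which is why the lemma does not generalize to arbitrary skeleta.
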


\begin{lemma} \cite{Parzanchevski:2012}
\label{lemma_deltaf}
Let $X$ be \emph{any} $k$-dimensional simplicial complex and let $f$ be defined as above. Then
\[
\langle L_{k-1}^{\text{up}} (X) f, f \rangle =\langle \delta_{k-1} f, \delta_{k-1} f \rangle  = |V|^2 |F(A_0,A_1,\dots,A_k)|. 
\]
\end{lemma}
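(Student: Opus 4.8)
The plan is to reduce the statement to a direct evaluation of the coboundary $\delta_{k-1}f$ on each $k$-simplex of $X$. The first equality is purely formal: since $L_{k-1}^{\up}(X) = \partial_k\delta_{k-1}$ and $\partial_k$ is by definition the adjoint of $\delta_{k-1}$, we have
\[
\langle L_{k-1}^{\up}(X)f, f\rangle = \langle \partial_k\delta_{k-1}f, f\rangle = \langle \delta_{k-1}f, \delta_{k-1}f\rangle,
\]
so it remains to show $\sum_{\tau\in X_k}(\delta_{k-1}f(\tau))^2 = |V|^2|F(A_0,\dots,A_k)|$.

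For a fixed $k$-simplex $\tau=\{v_0,\dots,v_k\}$ with $v_0<\dots<v_k$, I would expand $\delta_{k-1}f(\tau) = \sum_{j=0}^k (-1)^j f(\tau\setminus\{v_j\})$, using $[\tau:\tau\setminus\{v_j\}]=(-1)^j$, and analyze it according to how the vertices of $\tau$ distribute among the blocks $A_0,\dots,A_k$. Since $f(\sigma)\neq 0$ only when the $k$ vertices of a $(k-1)$-face $\sigma$ lie in $k$ distinct blocks, the behaviour splits into cases. If $\tau$ is \emph{rainbow}, i.e.\ $\tau\in F(A_0,\dots,A_k)$, then the chosen block-respecting ordering forces $v_j\in A_j$, so each $\tau\setminus\{v_j\}$ omits exactly block $A_j$ and $f(\tau\setminus\{v_j\}) = (-1)^j|A_j|$; the alternating incidence signs cancel against those built into $f$, and each term contributes $|A_j|$, giving $\delta_{k-1}f(\tau) = \sum_j|A_j| = |V|$.

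The heart of the argument is the vanishing $\delta_{k-1}f(\tau)=0$ for every non-rainbow $\tau$. By pigeonhole such a $\tau$ has some block containing at least two of its vertices and some block containing none. If two blocks are doubled, or one block holds at least three vertices, then no single deletion $\tau\setminus\{v_j\}$ can land in $k$ distinct blocks, so every term vanishes. The only delicate case is when exactly one block $A_p$ contains precisely two vertices $v_a, v_b$ of $\tau$ (with $a<b$ their positions in the sorted order) and one block $A_q$ contains none; here only the deletions of $v_a$ or $v_b$ survive, and both yield a face omitting block $A_q$, so $f(\tau\setminus\{v_a\}) = f(\tau\setminus\{v_b\}) = (-1)^q|A_q|$. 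The surviving contribution is $(-1)^a f(\tau\setminus\{v_a\}) + (-1)^b f(\tau\setminus\{v_b\}) = (-1)^q|A_q|\bigl((-1)^a+(-1)^b\bigr)$, and the main obstacle is to see this cancels. This is exactly where the choice of orientation pays off: because each block is a contiguous interval in the ordering, the two $A_p$-vertices occupy adjacent positions in the sorted order of $\tau$, whence $b=a+1$ and $(-1)^a+(-1)^b=0$.

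Finally, summing over all $k$-simplices, only the rainbow faces contribute, each with $(\delta_{k-1}f(\tau))^2 = |V|^2$, yielding $\langle\delta_{k-1}f,\delta_{k-1}f\rangle = |V|^2|F(A_0,\dots,A_k)|$. I expect the sign cancellation in the single-doubled-block case to be the only genuinely non-routine step; everything else is bookkeeping on which block a face omits and matching the incidence sign $[\tau:\tau\setminus\{v_j\}]=(-1)^j$ against the sign $(-1)^l$ in the definition of $f$. Note that this computation uses only the ordering and the definition of $f$, never the completeness of the skeleton, which is why the identity holds for \emph{any} $k$-dimensional complex.
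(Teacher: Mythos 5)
Your proposal is correct and follows essentially the same route as the paper's proof: evaluate $\delta_{k-1}f$ simplex by simplex, show it equals $|V|$ on rainbow faces and vanishes otherwise, with the key cancellation in the single-doubled-block case coming from the block-respecting ordering forcing the two coincident vertices into adjacent positions ($b=a+1$, so the signs $(-1)^a+(-1)^b$ cancel). The paper states this adjacency as ``$j+1=l$ by our linear ordering''; you supply the same justification slightly more explicitly, and correctly observe that completeness of the skeleton is never used.
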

For the first lemma, which can be proven by a straightforward calculation, there is no trivial generalization for arbitrary simplicial complexes. The latter lemma does not require any assumptions on the $(k-1)$-skeleton and we will be able to use it for our purposes. 
For completeness we here give the proof of Lemma~\ref{lemma_deltaf}.

\begin{proof}[Proof of Lemma~\ref{lemma_deltaf}]
Let $\tau = \{v_0,v_1,\dots, v_k\} \in X_k$ with $v_0 < v_1 < \dots <v_k$. 
By definition of the coboundary operator it is enough to prove that
\[
(\delta_{k-1}f) (\tau) =\begin{cases}
  |V| & \text{if } \tau \in F(A_0,A_1,\dots,A_k),\\
  0   & \text{otherwise.}
\end{cases}
\]
First suppose that $\tau \notin F(A_0,A_1,\dots, A_k)$. If $\tau$ has three vertices in the same block $A_i$ or four vertices in two blocks, then every term $\tau \setminus \{v_i\}$ in
\[
 (\delta_{k-1}f)(\tau) = \sum_{i=0}^{k} [\tau:\tau \setminus \{v_i\}]f(\tau \setminus \{v_i\})
\]
has two vertices in the same block and hence the sum vanishes.
If we assume that $v_j,v_l$ with $v_j < v_l$ is the only pair of vertices in the same block, then by our linear ordering \mbox{$j+1=l$} and, since we have $f(v \setminus\{v_j\}) = f(v \setminus\{v_l\})$, the two non-vanishing terms
$[\tau:\tau \setminus \{v_j\}]f(\tau \setminus \{v_j\})$ and  \mbox{$[\tau:\tau \setminus \{v_{j+1}\}]f(\tau \setminus \{v_{j+1}\})$} in $(\delta_{k-1}f)(\tau)$ cancel out.

In the case where $\tau \in F(A_0,A_1,\dots, A_k)$, i.e., where we have \mbox{$v_i \in A_i$} for all $i = 0,1,\dots, k$, we get
\begin{linenomath}
\begin{equation*}
(\delta_{k-1}f)(\tau) = \sum_{i=0}^k (-1)^i f(\tau \setminus \{v_i\}) = \sum_{i=0}^k (-1)^i (-1)^{i} |A_i| 
= |V|.
\end{equation*}
\end{linenomath}
\end{proof}

\section{Our Results}
\subsection{Proof of Theorem~\ref{thm:sparse}}
In this section we give the proof of Theorem~\ref{thm:sparse}. As in Section~\ref{s_parzanchevski} we fix a partition $A_0,A_1, \dots, A_k$ of $V$ realizing the minimum in $h(X)$ and choose an orientation accordingly. We define $f$ as in \eqref{eq:Def_f}.
A key ingredient of the proof of Theorem~\ref{thm:Parz} presented in Section~\ref{s_parzanchevski} is that $f\in Z^{k-1}$. This does not hold in general. To extend the proof to arbitrary complexes, we instead study the projection of $f$ onto the space $Z^{k-1}$:
\begin{lemma} 
\label{rayleigh}
Let $f \in C^{k-1}$ be as previously defined. Then there exist unique $z \in Z_{k-1}$, $b \in B^{k-1}$ such that $f= z+b$. Furthermore
\[\lambda(X) \leq \frac{|V|^2|F(A_0,A_1,\dots,A_k)|}{\langle z,z \rangle}.\]
\end{lemma}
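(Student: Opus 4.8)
The plan is to use the Hodge decomposition to justify the decomposition $f = z + b$, then leverage Lemma~\ref{lemma_deltaf} together with the fact that $\lambda(X)$ is computed via the Rayleigh quotient over $Z_{k-1}$.

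First I would establish existence and uniqueness of $z \in Z_{k-1}$ and $b \in B^{k-1}$ with $f = z + b$. Recall the Hodge decomposition \eqref{eqn:hodge-decomp}: $C^{k-1} = \mathcal{H}_{k-1} \oplus B^{k-1} \oplus B_{k-1}$, with $Z_{k-1} = \ker \partial_{k-1} = \mathcal{H}_{k-1} \oplus B_{k-1}$ (as $B_{k-1} \subseteq Z_{k-1}$ and $\mathcal{H}_{k-1} \subseteq Z_{k-1}$) and $Z_{k-1} = (B^{k-1})^{\perp}$. Thus $C^{k-1} = Z_{k-1} \oplus B^{k-1}$ is an orthogonal direct sum, and writing $z$ for the orthogonal projection of $f$ onto $Z_{k-1}$ and $b := f - z \in B^{k-1}$ gives the desired unique decomposition.

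Next I would bound the Rayleigh quotient. Since $z \in Z_{k-1}$, the variational characterization \eqref{eq_ray} gives $\lambda(X) \leq \langle L_{k-1}^{\up}(X) z, z \rangle / \langle z, z \rangle$, provided $z \neq 0$. The key point is to rewrite the numerator: since $b \in B^{k-1} = \ker L_{k-1}^{\up}(X)$ (recall $L_{k-1}^{\up}(X)$ vanishes on $B^{k-1}$ because $\ker L_{k-1}^{\up} = Z^{k-1} \supseteq B^{k-1}$), we have $L_{k-1}^{\up}(X) z = L_{k-1}^{\up}(X) f$. Then, using that $L_{k-1}^{\up}(X)$ is self-adjoint and kills $b$,
\[
\langle L_{k-1}^{\up}(X) z, z \rangle = \langle L_{k-1}^{\up}(X) f, z \rangle = \langle L_{k-1}^{\up}(X) f, f \rangle - \langle L_{k-1}^{\up}(X) f, b \rangle = \langle L_{k-1}^{\up}(X) f, f \rangle,
\]
the last term vanishing since $\langle L_{k-1}^{\up}(X) f, b \rangle = \langle f, L_{k-1}^{\up}(X) b \rangle = 0$. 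By Lemma~\ref{lemma_deltaf} this equals $|V|^2 |F(A_0,A_1,\dots,A_k)|$, yielding the claimed bound after dividing by $\langle z, z \rangle$.

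The main obstacle I anticipate is the degenerate case $z = 0$, where the Rayleigh quotient is undefined. If $z = 0$ then $f = b \in B^{k-1}$, forcing $\langle L_{k-1}^{\up}(X) f, f \rangle = 0$, so by Lemma~\ref{lemma_deltaf} we get $|F(A_0,A_1,\dots,A_k)| = 0$ and hence $h(X) = 0$; this entails $\lambda(X) = 0$ (a nontrivial cocycle exists, so $H^{k-1}(X;\R) \neq 0$) and the inequality holds trivially with the right-hand side interpreted as $0/0$ or via a limiting convention. I would handle this boundary case separately and note that the genuine content of the lemma lies in the case $z \neq 0$. A secondary subtlety is confirming that the orthogonality $B^{k-1} \perp Z_{k-1}$ used above is precisely the Hodge-theoretic statement $Z_{k-1} = (B^{k-1})^{\perp}$ already recorded in the preliminaries, so no new computation is required there.
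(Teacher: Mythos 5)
Your proposal is correct and follows essentially the same route as the paper: decompose $f=z+b$ via $Z_{k-1}=(B^{k-1})^{\perp}$, use that $b\in\ker L_{k-1}^{\up}(X)$ to replace $\langle L_{k-1}^{\up}(X)z,z\rangle$ by $\langle L_{k-1}^{\up}(X)f,f\rangle$, and conclude with \eqref{eq_ray} and Lemma~\ref{lemma_deltaf}. Your explicit treatment of the degenerate case $z=0$ is a detail the paper's proof silently omits, but it does not change the argument.
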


\begin{proof}
Since $Z_{k-1} = (B^{k-1})^{\bot}$, there exist unique co\-chains \mbox{$z \in Z_{k-1}$} and $b \in B^{k-1}$ such that $f= z+b$.
The claim follows by combining \eqref{eq_ray} with Lemma \ref{lemma_deltaf} and the fact that $\langle L_{k-1}^{\text{up}}(X) z, z \rangle = \langle L_{k-1}^{\text{up}}(X) f, f \rangle$ because
$b \in \text{ker}L_{k-1}^{\text{up}}(X)$.
\end{proof}
From now on we use $z$ and $b$ in the context of Lemma~\ref{rayleigh}.
To prove Theorem \ref{thm:sparse} we need to find a lower bound for $\langle z, z \rangle$. To the best of our knowledge, there is no way of explicitly finding $z$ by knowing $f$. We will instead make use of the fact that $b \in B^{k-1}$, i.e., there exists $g\in C^{k-2}$ such that $b = \delta_{k-2} g$, and estimate the distance of $f$ to any cochain of this form.

For the upcoming argument recall $d(\sigma)$ and $C(X)$ as defined in the introduction:
For a $(k-1)$-face $\sigma \in X_{k-1}$ we have
$
d(\sigma) = |\{\tau^{\partial} \in F^{\partial}(A_0,A_1,\dots,A_k): \sigma \subseteq \tau^{\partial}\}|
$
and
$
C(X) = \max_{\tau^{\partial} \in F^{\partial}(A_0,A_1,\dots,A_k)} \sum_{{\sigma \subseteq \tau^{\partial}}\atop \sigma \in X_{k-1}} d(\sigma). 
$

\begin{lemma}\label{lemma_combined} Let $f \in C^{k-1}$ be as previously defined and let $g \in C^{k-2}$ be arbitrary.
For $\tau^{\partial} \in F^{\partial}(A_0,A_1,\dots,A_k)$ define
\[q(\tau^{\partial},g) := \sum\nolimits_{{\sigma \subseteq \tau^{\partial}} \atop {\sigma \in X_{k-1}}}\frac{1}{d(\sigma)}(f(\sigma) - \delta_{k-2}g(\sigma))^2.\] Then
\begin{enumerate}[label=\alph*)]
 \item \label{lemma_ineq} 
$\| f - \delta_{k-2}g \|^2 \geq \sum_{\tau^{\partial} \in F^{\partial}(A_0,A_1,\dots,A_k)} q(\tau^{\partial},g).$

\item \label{prop_best}
For $\tau^{\partial}=\{v_0,v_1,\dots,v_k\} \in F^{\partial}(A_0,A_1,\dots,A_k)$ with $v_0 < v_1 < \dots < v_k$ let $d_j:= d(\tau^{\partial} \setminus \{v_j\})$. Then: 
\[
q(\tau^{\partial},g)\geq \frac{|V|^2}{\sum_{j=0}^k d_j}.
\]
\end{enumerate}
\end{lemma}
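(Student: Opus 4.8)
The plan is to prove the two parts separately: part \ref{lemma_ineq} follows from interchanging the order of summation, while part \ref{prop_best} reduces to a single Cauchy--Schwarz estimate once the cocycle constraint satisfied by any coboundary is extracted.

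For part \ref{lemma_ineq}, I would expand $\|f-\delta_{k-2}g\|^2 = \sum_{\sigma \in X_{k-1}}(f(\sigma)-\delta_{k-2}g(\sigma))^2$ and compare it with the right-hand side. The key observation is that for a fixed $(k-1)$-face $\sigma \in X_{k-1}$, the number of $\tau^{\partial}\in F^{\partial}(A_0,\dots,A_k)$ with $\sigma \subseteq \tau^{\partial}$ is by definition exactly $d(\sigma)$. Swapping the order of summation in $\sum_{\tau^{\partial}} q(\tau^{\partial},g)$ therefore cancels the factor $1/d(\sigma)$ against this multiplicity, giving $\sum_{\tau^{\partial}} q(\tau^{\partial},g) = \sum_{\sigma:\, d(\sigma)>0}(f(\sigma)-\delta_{k-2}g(\sigma))^2$. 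Since every summand is non-negative, dropping the terms with $d(\sigma)=0$ only decreases the sum, which yields the claimed inequality. Here I use that each $\tau^{\partial}\in K(X)$ has all of its $(k-1)$-faces in $X$, so the inner sum genuinely ranges over all $k+1$ facets of $\tau^{\partial}$.

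For part \ref{prop_best}, fix $\tau^{\partial}=\{v_0,\dots,v_k\}\in F^{\partial}$ with $v_0<\dots<v_k$; the chosen ordering forces $v_i\in A_i$. Writing $\sigma_j:=\tau^{\partial}\setminus\{v_j\}$, the block $A_j$ is the unique block missing from $\sigma_j$, so $f(\sigma_j)=(-1)^j|A_j|$. The decisive point is that $\delta_{k-2}g$ is a coboundary, hence a cocycle, which gives $\sum_{j=0}^k[\tau^{\partial}:\sigma_j]\,\delta_{k-2}g(\sigma_j)=0$. This uses only the identity $\delta_{k-1}\delta_{k-2}=0$ evaluated at $\tau^{\partial}$, and it remains valid even when $\tau^{\partial}\in K(X)\setminus X$, since all $(k-2)$-faces of $\tau^{\partial}$ still lie in $X$ and so $g$ is defined on them. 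As $[\tau^{\partial}:\sigma_j]=(-1)^j$, setting $c_j:=(-1)^j\,\delta_{k-2}g(\sigma_j)$ yields the single linear constraint $\sum_{j=0}^k c_j=0$.

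With this notation $(f(\sigma_j)-\delta_{k-2}g(\sigma_j))^2=(|A_j|-c_j)^2$, and because the $A_j$ partition $V$ we have $\sum_{j=0}^k(|A_j|-c_j)=|V|-0=|V|$. Applying Cauchy--Schwarz to the entries $(|A_j|-c_j)/\sqrt{d_j}$ and $\sqrt{d_j}$ (note $d_j\geq 1$ since $\tau^{\partial}$ itself contains $\sigma_j$) then gives
\[
|V|^2=\Bigl(\sum_{j=0}^k(|A_j|-c_j)\Bigr)^2\leq\Bigl(\sum_{j=0}^k\frac{(|A_j|-c_j)^2}{d_j}\Bigr)\Bigl(\sum_{j=0}^k d_j\Bigr)=q(\tau^{\partial},g)\sum_{j=0}^k d_j,
\]
which rearranges to the claimed bound. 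I expect the main obstacle to be part \ref{prop_best}: recognizing that the cocycle identity still applies at the ``virtual'' face $\tau^{\partial}\in K(X)\setminus X$, thereby producing the constraint $\sum_j c_j=0$ that makes the Cauchy--Schwarz step tight and reproduces the exact constant $|V|^2$.
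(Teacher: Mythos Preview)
Your proposal is correct and follows essentially the same approach as the paper: part~\ref{lemma_ineq} is the same double-counting argument (each $\sigma$ with $d(\sigma)>0$ contributes $d(\sigma)$ times with weight $1/d(\sigma)$), and part~\ref{prop_best} is the same Cauchy--Schwarz estimate using $\delta_{k-1}\delta_{k-2}g(\tau^{\partial})=0$ to force $\sum_j c_j=0$. The only cosmetic difference is that the paper first treats $\tau^{\partial}\in F(A_0,\dots,A_k)$ and then notes that the argument extends to $\tau^{\partial}\in F^{\partial}\setminus F$ by defining the incidence numbers $[\tau^{\partial}:\sigma]$ for empty $k$-faces, whereas you handle both cases at once; your remark that $d_j\geq 1$ is a small addition the paper leaves implicit.
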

%
We first show how to use Lemma~\ref{lemma_combined} to prove Theorem \ref{thm:sparse} and then prove Lemma~\ref{lemma_combined}.

\begin{proof}[Proof of Theorem \ref{thm:sparse}]
Since $b \in B^{k-1}$ there exists some $g \in C^{k-2}$ such that $f-z = b = \delta_{k-2}g$.
By Lemma~\ref{lemma_combined} we get
\begin{linenomath}
\begin{equation*}
 \langle z,z\rangle = \|f - \delta_{k-2}g\|^2 \geq \sum_{\tau^{\partial} \in F^{\partial}(A_0,A_1,\dots,A_k)}\frac{|V|^2}{\sum_{j=0}^k d_j} \geq |F^{\partial}(A_0,A_1,\dots,A_k)|\cdot \frac{|V|^2}{C(X)},
\end{equation*}
\end{linenomath}
where the second inequality follows from the definition of $C(X)$. Combined with Lemma~\ref{rayleigh} this proves Theorem~\ref{thm:sparse}.
\end{proof}


\begin{proof}[Proof of Lemma~\ref{lemma_combined}]
\begin{enumerate}[label=\alph*)]
 \item Consider the right hand sum. 
Note that for any $\sigma \in X_{k-1}$ such that $\sigma \subseteq \tau^{\partial}$ for some $\tau^{\partial} \in F^{\partial}(A_0,A_1,\dots,A_k)$ the corresponding term
appears exactly  ${d(\sigma)}$ times by definition. 
For $\sigma \nsubseteq \tau^{\partial}$ the term does not appear at all.
The statement follows by definition of the inner product.

\item First, assume that $\tau^{\partial}= \tau \in F(A_0,A_1,\dots,A_k)$. We will see that the proof for $\tau^{\partial} \in F^{\partial}(A_0,A_1,\dots,A_k)$ works almost analogously.

Let $\tau= \{v_0,v_1,\dots,v_k\}\in X_k$ such that $v_i \in A_i$ for $i = 0,1,\dots, k$. Then 
\begin{linenomath}
\begin{align}
q(\tau,g) 
&=\sum_{i=0}^k \frac{1}{d_i} \left((-1)^{i}|A_{i}| - \delta_{k-2}g(\tau \setminus \{v_i\})\right)^2 \nonumber\\
&=\sum_{i=0}^k \frac{1}{d_i} \left(|A_{i}| - [\tau:\tau\setminus\{v_i\}]\delta_{k-2}g(\tau \setminus \{v_i\})\right)^2 \nonumber.
\end{align}
\end{linenomath}
We observe that 
$\sum_{i=0}^k [\tau:\tau \setminus \{v_i\}] \delta_{k-2} g( \tau \setminus \{v_i\}) = \delta_{k-1}(\delta_{k-2}g)(\tau) = 0$.
Using a special version of the Cauchy-Schwarz inequality, i.e., that 
\[\sum_{i=0}^k \frac{a_i^2}{b_i} \geq \frac{\left(\sum_{i=0}^k a_i\right)^2 }{\sum_{i=0}^k b_i},\]
for all $a_i \in \R, b_i \in \R^+$, we obtain
\begin{linenomath}
\begin{align}
q(\tau,g) 
&\geq \frac{\left(\sum_{i=0}^k |A_{i}| - [\tau:\tau\setminus\{v_i\}]\delta_{k-2}g(\tau \setminus \{v_i\})\right)^2}{\sum_{i=0}^k d_i} \nonumber \\
&= \frac{\left(\sum_{i=0}^k |A_{i}| \right)^2}{\sum_{i=0}^k d_i} = \frac{|V|^2}{\sum_{i=0}^k d_i}. \nonumber 
\end{align}
\end{linenomath}

In the remainder we give the proof of the statement for $\tau^{\partial}= \{v_0,v_1,\dots,v_k\} \in F^{\partial}(A_0,A_1,\dots,A_k)$ with \mbox{$v_0< v_1 <\dots < v_k$.}
Observe that the whole proof works analogously, except that we have not defined the ``oriented incidence numbers'' $[\tau^{\partial}:\sigma]$ for ``empty $k$-faces'' \mbox{$\tau^{\partial} \in F^{\partial}(A_0,A_1,\dots,A_k) \setminus F(A_0,A_1,\dots,A_k)$}.

By defining it the obvious way as $(-1)^i$ if $\sigma = \tau^{\partial} \setminus \{v_i\}$, $i = 0,1,\dots,k$ and zero otherwise, i.e., if $\sigma \nsubseteq \tau^{\partial}$,
we observe that $\delta_{k-1}\delta_{k-2}g(\tau^{\partial})=0$ and the proof works analogously.
\end{enumerate}
\end{proof}

\subsection{Proof of Theorem \ref{thm:Theorem2}}

In this section we give the proof of Theorem~\ref{thm:Theorem2}. Since we consider real as well as $\Z_2$-cohomology, we denote the real coboundary operator by $\delta^\R$, the $\Z_2$-coboundary by $\delta^{\Z_2}$. The space of $\Z_2$-cochains is denoted by $C^{k-1}(X;\Z_2)$, the space of real cochains by $C^{k-1}(X)$ instead of $C^{k-1}(X;\R)$. Also, $B^{k-1}(X)$ stands for $B^{k-1}(X;\R)$. (We now add the space $X$ to the notation, because we will consider cochains in different spaces.)

The following lemma points out a special behavior of the $\Z_2$-cochains appearing in the definition of $h'(X)$ that will be central to our argument: The size of the $\Z_2$-boundary of such a cochain agrees with the size of its real coboundary.
\begin{lemma}\label{lem:connectionL - Z2}
Let $X$ be a $k$-complex with $n$ vertices.
Let $A_0,A_1,\ldots,A_{k-1} \subset V = V(X)$ be pairwise disjoint and let $f \in C^{k-1}(X;\Z_2)$ such that $\supp(f) \subset F(A_0,A_1,\ldots,A_{k-1})$.
Choose an orientation of the simplices of $X$ by fixing a linear ordering on $V$ such that for all $i<j \in \{0,1,\dots, k-1\}$ and $v \in A_i$, $w \in A_j$ we have $v < w$.

 Then, interpreting $f$ also as an $\R$-cochain with values in $\{0,1\}$, we have
\[
\|\delta^\mathbb{R} f\|^2 = \langle L^\up_{k-1}(X) f,f \rangle = |\delta^{\Z_2} f|.
\]
Here, $\|\cdot\|$ denotes the $\ell_2$-norm and $|\cdot|$ denotes the Hamming norm. 
\end{lemma}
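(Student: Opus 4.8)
The plan is to prove the two equalities separately, since they are of completely different character. The first, $\|\delta^{\R} f\|^2 = \langle L^{\up}_{k-1}(X) f, f\rangle$, is a formal consequence of the definitions and uses none of the special structure of $f$: since $L^{\up}_{k-1}(X) = \partial_k \delta_{k-1}$ and $\partial_k$ is by definition the adjoint of $\delta_{k-1}$, we have
\[
\langle L^{\up}_{k-1}(X) f, f\rangle = \langle \partial_k \delta_{k-1} f, f\rangle = \langle \delta_{k-1} f, \delta_{k-1} f\rangle = \|\delta^{\R} f\|^2.
\]

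The substance of the lemma is the second equality $\|\delta^{\R} f\|^2 = |\delta^{\Z_2} f|$, which I would establish $k$-face by $k$-face. Writing $\|\delta^{\R} f\|^2 = \sum_{\tau \in X_k} (\delta^{\R} f(\tau))^2$ and noting that $|\delta^{\Z_2} f|$ counts the $k$-faces $\tau$ with $\delta^{\Z_2} f(\tau) \neq 0$, it suffices to show that for every $k$-face $\tau$ the quantity $(\delta^{\R} f(\tau))^2$ equals $1$ when $\delta^{\Z_2} f(\tau) \neq 0$ and equals $0$ otherwise; equivalently, that $\delta^{\R} f(\tau) \in \{0, \pm 1\}$ and vanishes exactly when the number of $(k-1)$-subfaces of $\tau$ lying in $\supp(f)$ is even.

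The combinatorial heart of the argument is to count, for a fixed $\tau = \{v_0, \dots, v_k\}$, how many of its subfaces $\tau \setminus \{v_j\}$ can lie in $F(A_0, \dots, A_{k-1})$, hence in $\supp(f)$. A face in $F(A_0, \dots, A_{k-1})$ has exactly one vertex in each of the $k$ blocks and none outside $\bigcup_i A_i$, while $\tau$ has $k+1$ vertices; a short case analysis on how the vertices of $\tau$ distribute among the blocks shows that only two configurations give a nonzero count. Either $\tau$ has one vertex in each block plus a single vertex outside $\bigcup_i A_i$, in which case exactly one subface (obtained by deleting the outside vertex) lies in $F(A_0,\dots,A_{k-1})$; or $\tau$ has two vertices in a single block $A_l$ and one in each of the others, in which case exactly the two subfaces obtained by deleting a vertex of $A_l$ lie in $F(A_0,\dots,A_{k-1})$. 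In all remaining cases no subface lies in the support, so both coboundaries vanish at $\tau$.

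The role of the block-respecting orientation now enters and is the crux. In the first configuration the single contributing subface gives $\delta^{\R} f(\tau) = \pm f(\tau \setminus \{v_j\}) \in \{0, \pm 1\}$, matching $\delta^{\Z_2} f(\tau)$. In the second configuration the two vertices of $A_l$ occupy consecutive positions $l$ and $l+1$ in the chosen ordering, precisely because every other vertex of $\tau$ lies in a block strictly below or strictly above $A_l$; the two relevant incidence numbers are therefore $(-1)^l$ and $(-1)^{l+1}$, of opposite sign. Consequently, if $f$ equals $1$ on both subfaces the contributions cancel and $\delta^{\R} f(\tau) = 0$, matching the even $\Z_2$-count, whereas if $f$ equals $1$ on exactly one then $\delta^{\R} f(\tau) = \pm 1$, matching the odd count. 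Thus $(\delta^{\R} f(\tau))^2$ agrees termwise with the indicator of $\delta^{\Z_2} f(\tau) \neq 0$, and summing over $\tau$ gives the result. I expect the main obstacle to be exactly this ``at most two subfaces, with opposite incidence signs'' phenomenon: one must verify carefully that two contributing subfaces can arise only by deleting the two order-consecutive vertices of a doubled block, since this is where the hypothesis $\supp(f) \subset F(A_0, \dots, A_{k-1})$ and the adapted ordering conspire to force the real and $\Z_2$ coboundaries to agree.
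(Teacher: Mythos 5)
Your proposal is correct and follows essentially the same route as the paper's proof: a face-by-face comparison of $\delta^{\R}f(\tau)$ with $\delta^{\Z_2}f(\tau)$, using that $\tau$ has at most two $(k-1)$-faces in $F(A_0,\ldots,A_{k-1})$ and that the block-respecting ordering makes the two corresponding incidence numbers have opposite signs. Your write-up is somewhat more explicit than the paper's about why only the two configurations (one vertex per block plus an outside vertex, or a doubled block) can contribute and why the doubled-block vertices are order-consecutive, but the argument is the same.
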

\begin{proof} 
Note that any $k$-face $\tau \in X_k$ can have at most two $(k-1)$-faces that are contained in $F(A_0,A_1,\ldots,A_{k-1})$, and the same holds for $\supp(f) \subset F(A_0,A_1,\ldots,A_{k-1})$.

For $\tau \in X_k$ consider 
\[
\delta^\mathbb{R} f(\tau) = \sum_{\sigma \subset \tau, \sigma \in X_{k-1}} [\tau:\sigma]f(\sigma).
\]
We distinguish several cases.
 If $\tau$ has no faces in $\supp(f)$ this sum is empty.  The sum is $\pm1$ if $t$ has exactly one face in $\supp(f)$. Otherwise $\tau$ has exactly two faces $\sigma$ and $\sigma'$ with $f(\sigma)=f(\sigma')=1$.
By our choice of orientations, we get $[\tau:\sigma] = -[\tau:\sigma']$ and hence $\delta^\mathbb{R} f(\tau)=0$.

This shows that $\langle L^\up_{k-1}(X) f,f \rangle = \|\delta^\mathbb{R} f\|^2$ equals the number of $k$-faces with exactly one face in $\supp(f)$. As we assumed that $\supp(f) \subset F(A_0,A_1,\ldots,A_{k-1})$, this is $|\delta^{\mathbb{Z}_2} f|$.
\end{proof}

Before we come to the proof of Theorem~\ref{thm:Theorem2}, we give an upper bound for the eigenvalue $\lambda(X)$. By the variational characterization of eigenvalues, $\lambda(X)$ is the minimum over all $f \in C^{k-1}(X,\R)$ of unit norm that are orthogonal to $B^{k-1}(X)$. The key observation here is that we can get rid of this orthogonality constraint.
\begin{lemma}\label{lem:Reformulation}
Let $X$ be a $k$-complex with $n$ vertices. Then
\begin{equation}\label{eq:Reformulation} 
\lambda(X) \leq \min_{\substack{f \in C^{k-1}(X),\\ f \notin B^{k-1}(X)}} \frac{n \cdot \langle L^\up_{k-1}(X)f,f \rangle}{\langle  L^\up_{k-1}(K(X)) f,f \rangle}.
\end{equation}
If $\langle  L^\up_{k-1}(K(X)) f,f \rangle =0$, we define $\frac{n \cdot \langle L^\up_{k-1}(X)f,f \rangle}{\langle  L^\up_{k-1}(K(X)) f,f \rangle} = \infty$.
For $X$ with complete $(k-1)$-skeleton \eqref{eq:Reformulation} holds with equality.
\end{lemma}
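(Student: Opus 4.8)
The plan is to prove the inequality via the variational (Rayleigh-quotient) characterization of $\lambda(X)$ given in \eqref{eq_ray}, by producing, for each admissible $f$, a genuine test vector in $Z_{k-1} = (B^{k-1}(X))^\perp$ whose Rayleigh quotient is controlled by the right-hand side. The key point, just as in the proof of Lemma~\ref{rayleigh}, is that $L^\up_{k-1}(X)$ annihilates $B^{k-1}(X)$, so the orthogonal projection of $f$ onto $Z_{k-1}$ does not change the numerator $\langle L^\up_{k-1}(X)f,f\rangle$. Concretely, write $f = z + b$ with $z \in Z_{k-1}$ and $b \in B^{k-1}(X)$; since $f \notin B^{k-1}(X)$ we have $z \neq 0$, and $\langle L^\up_{k-1}(X)z,z\rangle = \langle L^\up_{k-1}(X)f,f\rangle$. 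Feeding $z$ into \eqref{eq_ray} gives
\[
\lambda(X) \;\leq\; \frac{\langle L^\up_{k-1}(X)f,f\rangle}{\langle z,z\rangle},
\]
so it remains to bound $\langle z,z\rangle$ from below, and the natural target is
\[
\langle z,z\rangle \;\geq\; \frac{\langle L^\up_{k-1}(K(X))f,f\rangle}{n}.
\]

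To establish that lower bound I would compare the two completions at the level of $\|z\|^2 = \|f - b\|^2 = \min_{b' \in B^{k-1}(X)} \|f - b'\|^2$, i.e.\ $\langle z,z\rangle$ is the squared distance from $f$ to the coboundary space $B^{k-1}(X)$. The heart of the argument is then the \emph{completeness inequality} for the complete complex $K_n^k$ (the real-coefficient analogue of the Meshulam--Wallach Proposition stated in Section~\ref{sec:DiffNotions}), which asserts that for any cochain $h$ on the complete complex, $\langle L^\up_{k-1}(K(X))h,h\rangle \geq \tfrac{n}{k+1}\cdot(\text{something})$; more directly, one uses that on $K(X)$ the relevant quantity $\langle L^\up_{k-1}(K(X))f,f\rangle$ measures the real coboundary in the completion, which only depends on $f$ modulo $B^{k-1}(X)$, and that the distance $\langle z,z\rangle$ to $B^{k-1}(X)$ controls the completion-Rayleigh quotient from below by a factor of $n$. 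I expect to invoke the spectral gap of the complete complex here: it is a classical fact that $L^\up_{k-1}(K_n^k)$ acts as $n\cdot\mathrm{Id}$ on $Z_{k-1}(K_n^k)$, which gives $\langle L^\up_{k-1}(K(X))z,z\rangle = n\langle z,z\rangle$ (after checking $z$ lands in the appropriate subspace), and this is exactly the bridge between numerator and denominator.

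The main obstacle, and the step requiring the most care, is reconciling the two different complexes: $f$ lives on $X$, but $L^\up_{k-1}(K(X))$ and $B^{k-1}(X)$ refer to different ambient complexes, so one must check that $\langle L^\up_{k-1}(K(X))f,f\rangle = \langle L^\up_{k-1}(K(X))z,z\rangle$ — i.e.\ that the $X$-coboundary $b$ that we subtract is also a $K(X)$-coboundary annihilated by $L^\up_{k-1}(K(X))$. This holds because $B^{k-1}(X) \subseteq B^{k-1}(K(X))$ (the completion only adds top faces, leaving the $(k-1)$- and $(k-2)$-skeleta, hence the image of $\delta_{k-2}$, unchanged), so $b \in \ker L^\up_{k-1}(K(X))$ as well. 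Assembling these pieces yields $\langle z,z\rangle = \tfrac{1}{n}\langle L^\up_{k-1}(K(X))z,z\rangle = \tfrac{1}{n}\langle L^\up_{k-1}(K(X))f,f\rangle$, which combined with the displayed Rayleigh bound gives \eqref{eq:Reformulation}. For the equality claim when $X$ has complete $(k-1)$-skeleton, $K(X) = K_n^k$ has $\dim H^{k-1} $ controlled so that $Z_{k-1} = (B^{k-1})^\perp$ coincides across the two complexes, and the inequality $\langle L^\up_{k-1}(K_n^k)z,z\rangle = n\langle z,z\rangle$ is exact on $Z_{k-1}$; minimizing over $f$ then saturates, so \eqref{eq:Reformulation} becomes an equality.
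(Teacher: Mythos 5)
Your overall strategy coincides with the paper's: bound $\lambda(X)$ via the Rayleigh quotient \eqref{eq_ray} applied to the projection $z$ of $f$ onto $Z_{k-1}=(B^{k-1}(X))^{\perp}$, use that $L^\up_{k-1}(X)$ annihilates $B^{k-1}(X)$ so the numerator is unchanged, and then convert $\langle z,z\rangle$ into $\langle L^\up_{k-1}(K(X))f,f\rangle$ by means of the identity $L^\up_{k-1}(K_n^k)+L^\down_{k-1}(K_n^k)=nI$. However, there is a genuine gap at the step where you assert $\langle L^\up_{k-1}(K(X))z,z\rangle = n\langle z,z\rangle$. That identity is a statement about the \emph{complete} complex $K_n^k$, and $K(X)=K_n^k$ only when $X$ has a complete $(k-1)$-skeleton --- precisely the case the lemma is meant to go beyond. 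For general $X$, the operator $L^\up_{k-1}(K(X))$ does not act as $n\cdot\mathrm{Id}$ on $(B^{k-1}(X))^{\perp}$: its diagonal entries are the numbers of $k$-faces of $K(X)$ containing a given $(k-1)$-face, which can be much smaller than $n-k$. Moreover, $L^\up_{k-1}(K_n^k)$ is not even defined on $C^{k-1}(X)$ when $X_{k-1}\subsetneq\binom{V}{k}$, so ``checking that $z$ lands in the appropriate subspace'' cannot repair this.

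What is true, and what the paper proves, is the one-sided bound $\langle L^\up_{k-1}(K(X))z,z\rangle\leq n\langle z,z\rangle$, which suffices since you only need a lower bound on $\langle z,z\rangle$. The missing argument is the extension step: extend $z$ by zero to $\tilde z\in C^{k-1}(K_n^k)$, check that $z\perp B^{k-1}(X)$ implies $\tilde z\perp B^{k-1}(K_n^k)$ (using that every $(k-2)$-face of a $(k-1)$-simplex of $X$ lies in $X$), apply the identity in $K_n^k$ to get $\langle L^\up_{k-1}(K_n^k)\tilde z,\tilde z\rangle=n\langle\tilde z,\tilde z\rangle=n\langle z,z\rangle$, and finally observe that $\langle L^\up_{k-1}(K(X))z,z\rangle=\|\delta_{K(X)}z\|^2\leq\|\delta_{K_n^k}\tilde z\|^2$ because the $k$-faces of $K(X)$ form a subset of those of $K_n^k$ on which the two coboundaries agree. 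The half of the ``two complexes'' obstacle you do address --- that $B^{k-1}(X)=B^{k-1}(K(X))$, so $b$ is killed by $\delta_{K(X)}$ and the denominator depends only on $f$ modulo $B^{k-1}(X)$ --- is correct but is the easy half; the comparison with $K_n^k$ is the substantive one. Your equality claim for the complete-skeleton case is fine, since there $K(X)=K_n^k$ and the identity is exact.
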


\begin{proof}
First we assume that $X$ has a complete $(k-1)$-ske\-le\-ton.
The following equality is contained implicitly in \cite{Kalai:1983} and follows from a straightforward calculation using the matrix representations of the Laplacians: 
\[
 L^\up_{k-1}(K_n^k) + L^\down_{k-1}(K_n^k) = nI. 
\]
Hence, we have for any $f \in C^{k-1}(X) = C^{k-1}(K_n^k)$:
\[
n \langle f,f \rangle = \langle L^\up_{k-1}(K_n^k) f,f \rangle + \langle  L^\down_{k-1}(K_n^k) f,f \rangle.
\]
Combining this observation with the variational characterization of eigenvalues and the fact that $L^\down_{k-1}(K_n^k) f = 0$ for $f\perp B^{k-1}(X)=B^{k-1}(K_n^k)$, we get:
\[
\lambda(X) = \min_{\substack{f \in C^{k-1}(X),\\ f \perp B^{k-1}(X)}} \frac{\langle L^\up_{k-1}(X)f,f \rangle}{\langle f,f \rangle} 
= \min_{\substack{f \in C^{k-1}(X),\\ f \perp B^{k-1}(X)}} \frac{n\cdot \langle L^\up_{k-1}(X)f,f \rangle}{\langle L^\up_{k-1}(K_n^k) f,f \rangle}.
\]
For $f \notin B^{k-1}(X)$ that is not orthogonal to $B^{k-1}(X)$, consider the projection $b$ of $f$ onto $B^{k-1}(X)$ and let $z = f-b$. Then $z\perp B^{k-1}(X)$ and it holds that 
$
\langle L^\up_{k-1}(X)z,z \rangle=\langle L^\up_{k-1}(X)f,f \rangle
$
as well as $\langle L^\up_{k-1}(K_n^k) z,z \rangle = \langle L^\up_{k-1}(K_n^k) f,f \rangle$. This shows that we can omit the orthogonality constraint.

Now, consider the general case of a $k$-complex $X$ with an arbitrary $(k-1)$-skeleton.
Let $f \in C^{k-1}(X)$. We extend $f$ to $\tilde{f} \in C^{k-1}(K_n^k)$ defined by 
 \[
 \tilde{f}(\sigma)=\begin{cases}
 f(\sigma) & \text{if } \sigma \in X,\\
 0 & \text{otherwise.}
 \end{cases}
 \]
A straightforward calculation shows that $\tilde{f} \perp B^{k-1}(K_n^k)$ if $f\perp B^{k-1}(X)$.
Hence, we can argue as above to see that for $f\perp B^{k-1}(X)$ we get
\[
n \langle f,f \rangle=n \langle \tilde{f},\tilde{f} \rangle = \langle L^\up_{k-1}(K_n^k) \tilde{f},\tilde{f} \rangle \geq \langle L^\up_{k-1}(K(X)) f,f \rangle.
\]
Thus,
\[
\lambda(X) = \min_{\substack{f \in C^{k-1}(X),\\ f \perp B^{k-1}(X)}} \frac{n\cdot \langle L^\up_{k-1}(X)f,f \rangle}{\langle L^\up_{k-1}(K_n^k) \tilde{f},\tilde{f} \rangle}
\leq \min_{\substack{f \in C^{k-1}(X),\\ f \perp B^{k-1}(X)}} \frac{n\cdot \langle L^\up_{k-1}(X)f,f \rangle}{\langle L^\up_{k-1}(K(X)) f,f \rangle}.
\]
For $f \notin B^{k-1}(X)$ that is not orthogonal to $B^{k-1}(X)$, we again consider the projection $b$ of $f$ onto $B^{k-1}(X)$. For $z = f-b$ we have $z\perp B^{k-1}(X)=B^{k-1}(K(X))$. We also get $\langle L^\up_{k-1}(X)z,z \rangle=\langle L^\up_{k-1}(X)f,f \rangle$ and $\langle L^\up_{k-1}(K(X)) z,z \rangle = \langle L^\up_{k-1}(K(X)) f,f \rangle$, which shows that also in this case we can omit the orthogonality constraint
\end{proof}

Now we can prove Theorem~\ref{thm:Theorem2}:
\begin{proof}[Proof of Theorem~\ref{thm:Theorem2}]
Fix sets $A_0,A_1,\ldots,A_{k-1} \subset V$ and a cochain $f \in C^{k-1}(X,\mathbb{Z}_2)$ with $\supp(f) \subset F(A_0,A_1,\ldots,A_{k-1})$ such that 
\[
h'(X) = n \cdot |\delta^{\Z_2}_X f|/|\delta^{\Z_2}_{K(X)} f|.
\]
If $|\delta^{\Z_2}_{K(X)} f|=0$, we have $h'(X) = \infty$ and there is nothing to show. Otherwise, we apply Lemmas~\ref{lem:connectionL - Z2} and \ref{lem:Reformulation} as follows:

Since the value of $\lambda(X)$ does not depend on the chosen orientations of the simplices of $X$, we are free to choose the orientations as in Lemma~\ref{lem:connectionL - Z2}, i.e., we fix a linear ordering on $V$ such that for all $i<j$ and $v \in A_i$, $w \in A_j$ we have $v < w$.
Then by Lemma~\ref{lem:connectionL - Z2} we get 
\[
\langle L^\up_{k-1}(X) f,f \rangle = |\delta^{\mathbb{Z}_2}_X f|
\qquad\text{and}\qquad
\langle L^\up_{k-1}(K(X)) f,f \rangle = |\delta^{\mathbb{Z}_2}_{K(X)} f|.
\]
As $|\delta^{\mathbb{Z}_2}_{K(X)} f| \neq 0$, we have $f \notin B^{k-1}(X)$ and can apply Lemma~\ref{lem:Reformulation} to obtain
\[
 \lambda (X) \leq \frac{n \cdot \langle L^\up_{k-1}(X)f,f \rangle}{\langle  L^\up_{k-1}(K(X)) f,f \rangle} = h'(X).
\]
\end{proof}

\section*{Acknowledgements}
The authors are grateful to Uli Wagner for inspiring discussions and for making us aware of the question discussed here. We would like to thank Micha{\l} Adamaszek and the referees for helpful feedback. 


\bibliographystyle{abbrv}

\end{document}